\newcommand{\RR}{\mathbb{R}}
\newcommand{\NN}{\mathbb{N}}
\newcommand{\R}{\mathbf{R}}
\newcommand{\Real}{\R \mathbf{e}}
\newtheorem{theorem}{\textbf{Theorem}}
\newtheorem{definition}[theorem]{\textbf{Definition}}
\newtheorem{lemma}[theorem]{\textbf{Lemma}}
\newtheorem{proposition}[theorem]{\textbf{Proposition}}
\newtheorem{remark}[theorem]{Remark}
\newenvironment{proof}[1][Proof]{\noindent\textbf{#1.} }{\ \rule{0.5em}{0.5em}}
\begin{document}

\begin{frontmatter}

\title{\textbf{On the strong convergence of the Faedo-Galerkin approximations to a strong T-periodic solution of the torso-coupled bi-domain model}}

\author[mymainaddress0]{Raul Felipe-Sosa\corref{mycorrespondingauthor}}
\cortext[mycorrespondingauthor]{Corresponding author}
\ead{raul.felipe@unach.mx}
\author[mymainaddress]{Andres Fraguela-Collar}
\author[mymainaddress0]{Yofre H. García-Gómez}

\address[mymainaddress0]{Facultad de Ciencias en Física y Matemáticas-UNACH, Chiapas, M\'exico.}
\address[mymainaddress]{Facultad de Ciencias Físico Matemáticas-BUAP, Puebla, M\'exico.}

\begin{abstract}
In this paper, we investigate the convergence of the Faedo-Galerkin approximations, in a strong sense, to a strong T-periodic solution of the torso-coupled bidomain model where $T$ is the period of activation of the inner wall of heart. First, we define the torso-coupled bi-domain operator and prove some of its more important properties for our work. After, we define the abstract evolution system of equations associated with torso-coupled bidomain model and give the definition of strong solution. We prove that the Faedo-Galerkin's approximations have the regularity of a strong solution, and we find that some restrictions can be imposed over the initial conditions, so that this sequence of Faedo-Galerkin fully converge to a strong solution of the Cauchy problem. Finally, this results are used for showing the existence a strong $T$-periodic solution. 
\end{abstract}

\begin{keyword}
bidomain model \sep Faedo-Galerkin scheme \sep variational formulation \sep weak and strong periodic solutions
\MSC[2010] 00-01\sep  99-00
\end{keyword}

\end{frontmatter}

\section{Introduction}
In this paper, we use the formulation of the bidomain equations given in \cite{Sundnes}, where a parabolic PDE coupled to an elliptic one is considered, and both coupled to a system of ODEs that model the activation variables for the ionic currents. In this case, the state variables are the membrane potential $u_m = u_i - u_e$ and the extra and intra cellular potential $u_i, u_e$, respectively. We consider a region $\Omega$ in $\RR^3$ to represent the heart embedded in the torso. In this regard, it is assumed that $\Omega = \Omega_H \cup \Omega_T$, where $\Omega_H$ is the region representing the heart and $\Omega_T$ represents the torso. The boundary of $\Omega_H$ has two components, that is, $\partial \Omega_H = \Sigma_{endo} \cup \Sigma_{epi}$ whit $\Sigma_{endo}$ representing the cardiac endocardium, where the periodic activation is given, and $\Sigma_{epi}$ represents the epicardium which is in contact with the torso. Also, $\Omega_T$ has two components, $\partial \Omega_T = \Sigma_{epi}\cup \Sigma_{T}$, where $\Sigma_{T}$ the exterior boundary of the torso. 

The equations of bidomain are performed as
\begin{align}
\label{eqn0}&A_mC_m\frac{\partial u_m}{\partial t} + A_m I_{ion}(u_m,\mathbf{w}) - \nabla \cdot\left(\sigma_i \nabla u_m\right) - \nabla \cdot\left(\sigma_e \nabla u_e\right) = 0, \;\;\; \hbox{in $(0,\infty)\times \Omega_H$},\\
\nonumber
&-\nabla\cdot\left(\sigma_i\nabla u_m + (\sigma_i + \sigma_e)\nabla u_e\right) = 0,\;\;\; \hbox{in $(0,\infty)\times \Omega_H$},\\
\nonumber
&\frac{\partial \mathbf{w}}{\partial t} + g(u_m,\mathbf{w}) = 0,\;\;\; \hbox{in $(0,\infty)\times \Omega_H$},
\end{align}
where,
\begin{itemize}
\item $A_m, C_m$ are the rate of cellular membrane area per volume unit and the cellular membrane capacitance per area unit, respectively,
\item $I_{ion}$ is the ionic current across the cell membrane per area unit which depends on the membrane potential and the vector of activation variables $\mathbf{w}$,
\item $g$ is a vector field that models the behavior of the ionic channels,
\item $\sigma_i(\mathbf{x}), \sigma_e(\mathbf{x})$ are matrices depending on the space variable$\mathbf{x} \in \Omega_H$.
\end{itemize}
In general, $\mathbf{w} = \left(w_1, w_2,\ldots,w_k\right)$ is a vector that have several components, called gate variables. These variables model the closure or opening of the ionic channels.

These equations are coupled with a model for the torso which are considered as a passive conductor. 
\begin{align*}
&-\nabla\cdot\left(\sigma_T \nabla u_T\right) = 0,\;\;\; \hbox{in $(0,\infty)\times \Omega_T$},\\
&\sigma_T\nabla u_T\cdot \mathbf{n} = 0,\;\;\; \hbox{in $(0,\infty)\times \Sigma_T$}.
\end{align*}
Here, $u_T$ is the potential in the torso, $\sigma_T$ is its conductivity which depends on the space variable, and $\mathbf{n}$ is the outward unit normal to $\Sigma_T$. The last condition means that no current can flow outside the torso across $\Sigma_T$. 

In addition to the equations of bidomain and the model for the torso, we should give boundary conditions and coupling conditions with the torso. A condition widely assumed, see \cite{Tung} and \cite{Boulakia}, is that the intracellular current does not propagate
outside the heart:
\begin{align*}
\sigma_i \nabla u_i \cdot \mathbf{n} = 0,\;\;\; \hbox{in $(0,\infty)\times \partial \Omega_H$},
\end{align*} 
which can be rewritten as 
\begin{align*}
\sigma_i \nabla u_m\cdot \mathbf{n} +  \sigma_i \nabla u_e\cdot \mathbf{n} = 0,\;\;\; \hbox{in $(0,\infty)\times \partial \Omega_H$}.
\end{align*}
For the extracelullar current, we give the following boundary condition
\begin{align*}
\sigma_e \nabla u_e\cdot \mathbf{n} = s_e,,\;\;\; \hbox{in $(0,\infty)\times\Sigma_{endo}$}. 
\end{align*}
where $s_e$ is a $T$-periodic function in the variable $t$ that represents the activation of endocardium. 

On other hand, perfect electric conditions between the heart and the torso are considered
\begin{align*}
\left\{
\begin{array}{c}
u_e = u_T,\\
\sigma_e \nabla u_e\cdot \mathbf{n} = \sigma_T \nabla u_T\cdot \mathbf{n},
\end{array}
\right.
\hbox{in $(0,\infty)\times\Sigma_{epi}$}.
\end{align*}
In summary, the Cauchy problem associated with the torso-coupled bidomain model can be presented as
\begin{align}
\label{eqn1}&\frac{\partial u_m}{\partial t} + f(u_m,\mathbf{w}) - \nabla \cdot\left(\sigma_i \nabla u_m\right) - \nabla \cdot\left(\sigma_e \nabla u_e\right) = 0, \;\;\; \hbox{in $(0,\infty)\times \Omega_H$},\\
\label{eqn2}&-\nabla\cdot\left(\sigma_i\nabla u_m + (\sigma_i + \sigma_e)\nabla u_e\right) = 0,\;\;\; \hbox{in $(0,\infty)\times \Omega_H$},\\
&\frac{\partial \mathbf{w}}{\partial t} + g(u_m,\mathbf{w}) = 0,\;\;\; \hbox{in $(0,\infty)\times \Omega_H$},\\
&\sigma_i \nabla u_m\cdot \mathbf{n} +  \sigma_i \nabla u_e\cdot \mathbf{n} = 0,\;\;\; \hbox{in $(0,\infty)\times \partial \Omega_H$},\\
&\sigma_i \nabla u_m\cdot\mathbf{n} + \left(\sigma_i + \sigma_e\right)\nabla u_e\cdot\mathbf{n} = s_e,\;\;\; \hbox{in $(0,\infty)\times\Sigma_{endo}$},\\
&\left\{
\begin{array}{c}
u_e = u_T,\\
\sigma_e \nabla u_e\cdot \mathbf{n} = \sigma_T \nabla u_T\cdot \mathbf{n},
\end{array}
\right.
\hbox{in $(0,\infty)\times\Sigma_{epi}$},\\
\label{eqn7}&-\nabla\cdot\left(\sigma_T \nabla u_T\right) = 0,\;\;\; \hbox{in $(0,\infty)\times \Omega_T$},\\
\label{eqn8}&\sigma_T\nabla u_T\cdot \mathbf{n} = 0,\;\;\; \hbox{in $(0,\infty)\times \Sigma_T$},\\
\label{eqn9}&u_m(0,\mathbf{x}) = u^{(0)}_m(\mathbf{x}),\;\; \mathbf{w}(0,\mathbf{x}) = \mathbf{w}^{(0)}(\mathbf{x}),\;\;\; \hbox{en $\Omega_H$}.
\end{align}
\begin{remark}
Note that the constants $A_m, C_m$ do not appear in the equation \eqref{eqn1}. This is owing to we have divided the equation \eqref{eqn0} by the term $A_m C_m$ and the terms $I_{ion}/C_m, \sigma_{i,e}/A_mC_m$ have been denoted by $f, \sigma_{i,e}$, respectively.
\end{remark}
In this paper, we work with the operational formulation of the bidomain problem. That is, the boundary problem \eqref{eqn1}-\eqref{eqn9} is described as an abstract evolution system, where its linear part is defined from the so-called torso coupled bidomain operator. Our goal is to obtain strong solutions associated to this formulation, taking them as limit of a function sequence which have the regularity of a strong solution. These are solutions are of a non-linear ODEs systems sequence. The mentioned sequence of functions is called Faedo-Galerkin approximation.

The results above statement, which are proved in this paper, present certain advantages with respect to other results found in the literature about the convergence the Faedo-Galerkin approximations. In general, the Faedo-Galerkin systems arise in the weak or variational setting. In that case, the principal result with respect to the convergence of the Faedo-Galerkin approximations affirms the convergence, in a weak sense, of a subsequence to a weak solution. However, in this paper, we demonstrate the convergence of the entire sequence to a strong solution. This result has important implications. For instance, starting from a result like this one, it is possible to define a method in order to approximate a strong solution. In this sense, the convergence of the entire sequence to the sought strong solution allows us to make sure that the functions of the sequence are every time closer to this solution, when the subindex towards to infinite. This would not be a fact if we only prove that one subsequence of the Faedo-Galerkin sequence converges.   
\section{Some general considerations}
In the strong context, we assume that boundary of $\Omega$, $\partial \Omega$, is $C^{2 + \nu}$ and the coefficients of matrices $\sigma_{i,e}, \sigma_{T}$ are $C^{1 + \nu}(\Omega_{H,T})$, for some $\nu > 0$, respectively. Also, the conductivity matrices are uniformly elliptic, that is, there are positive constants $M, m$ such that
\begin{align*}
m|\xi| \leq \xi^{t}\sigma_{i,e,T}(\mathbf{x})\xi \leq M|\xi|,\;\;\; \hbox{for all $\xi \in \RR^3$ and a.e $\mathbf{x} \in \Omega_{H,T}$}.
\end{align*} 
The fiber structure of the heart is another aspect that should be considered. This structure determines the anisotropic properties of the cardiac muscle. It is know that velocity of conduction of the depolarization wavefront is faster in the longitudinal direction to the fiber than in the transverse direction. This implies the existence of a matrix $M(\mathbf{x})$ whose columns are the orthogonal vectors $\left\{\mathbf{a}_l(\mathbf{x}),\mathbf{a}_t(\mathbf{x}),\mathbf{a}_{n}(\mathbf{x})\right\}$ with $\mathbf{a}_l$ parallel to the local fiber direction, $\mathbf{a}_{n}$ the vector orthogonal to the fiber lamina and $\mathbf{a}_t$ the vector lying on the lamina and orthogonal to fiber direction, such that 
\begin{align*}
\sigma_{i,e}(\mathbf{x}) = M^{T}(\mathbf{x})M^{*}_{i,e}M(\mathbf{x}).
\end{align*}
Setting 
\begin{align*}
M^{*}_{i,e} = \hbox{diag}\left[\sigma^{i,e}_{l},\sigma^{i,e}_{t},\sigma^{i,e}_{n}\right],
\end{align*}
where $\sigma^{i,e}_{l},\sigma^{i,e}_{t},\sigma^{i,e}_{n}$ are local coefficients of conductivity measured along the corresponding directions, see \cite{Colli1}. 

The fibers are tangent to the boundary, thus 
\begin{align}\label{ortfib1}
\sigma_{i,e}(\mathbf{x})\mathbf{n}(\mathbf{x}) = \sigma^{i,e}_{n}\mathbf{n}(\mathbf{x}),\;\;\; \hbox{for a.e. $\mathbf{x} \in \partial \Omega_H$},
\end{align}
with $\sigma^{i,e}_{n} \geq m > 0$.

The above has the following consequence
\begin{lemma}
If the boundary of $\Omega_H$ is $C^1$ and $M, M^{*}_{i,e}$ are $C^{0}(\overline{\Omega_H})$, then the following statements are true
\begin{description}
\item[i)] $\nabla u_i\cdot\mathbf{n} = 0$ and $\nabla u_e\cdot\mathbf{n} = 0$ $\Leftrightarrow$ $\nabla u_m\cdot\mathbf{n} = 0.$ 
\item[ii)] $\nabla u_{i,e}\cdot \mathbf{n} = 0$ $\Leftrightarrow$ $\left(\sigma_{i,e}\nabla u_{i,e}\right)\cdot \mathbf{n} = 0$ and $\left(\sigma_i\nabla u_{e}\right)\cdot \mathbf{n} = 0$ $\Leftrightarrow$ $\left\{\begin{array}{c}
\left(\sigma_i\nabla u_m\right) \cdot \mathbf{n} + \left(\sigma_i\nabla u_e\right) \cdot \mathbf{n} = 0,\\
\left(\sigma_i\nabla u_m\right)\cdot \mathbf{n} + \left((\sigma_i + \sigma_e)\nabla u_e\right)\cdot\mathbf{n} = 0.
\end{array}\right.$
\end{description}
\end{lemma}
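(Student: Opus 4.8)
The plan is to reduce the whole lemma to the single pointwise identity provided by \eqref{ortfib1}. Since $\sigma_{i,e} = M^{T}M^{*}_{i,e}M$ with $M^{*}_{i,e}$ diagonal, the conductivity tensors are symmetric, so for any scalar field $v$ regular enough near $\partial\Omega_H$ for its normal (and conormal) trace to be defined, one has, on $\partial\Omega_H$,
\begin{align*}
\left(\sigma_{i,e}\nabla v\right)\cdot\mathbf{n} = \left(\sigma_{i,e}\mathbf{n}\right)\cdot\nabla v = \sigma^{i,e}_{n}\,\left(\nabla v\cdot\mathbf{n}\right),
\end{align*}
and in the same way $\left(\sigma_i\nabla v\right)\cdot\mathbf{n} = \sigma^{i}_{n}\left(\nabla v\cdot\mathbf{n}\right)$ for every such $v$, in particular for $v=u_e$. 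Because $\sigma^{i,e}_{n}\geq m>0$, these scalar factors may be cancelled; this is the only analytic ingredient, everything else being linear algebra in the boundary traces $\nabla u_i\cdot\mathbf{n}$, $\nabla u_e\cdot\mathbf{n}$, $\nabla u_m\cdot\mathbf{n}$, which are tied together by $u_m=u_i-u_e$.

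For item (i): the implication ``$\Rightarrow$'' is immediate from $\nabla u_m\cdot\mathbf{n}=\nabla u_i\cdot\mathbf{n}-\nabla u_e\cdot\mathbf{n}$. For ``$\Leftarrow$'' I would invoke the standing no-flux condition for the intracellular current, $\sigma_i\nabla u_i\cdot\mathbf{n}=0$ on $\partial\Omega_H$: by the identity above this already yields $\nabla u_i\cdot\mathbf{n}=0$, and then $\nabla u_m\cdot\mathbf{n}=0$ forces $\nabla u_e\cdot\mathbf{n}=\nabla u_i\cdot\mathbf{n}-\nabla u_m\cdot\mathbf{n}=0$. It should be stressed in the write-up that $\nabla u_m\cdot\mathbf{n}=0$ by itself is not enough; the intracellular condition, assumed throughout, is what makes ``$\Leftarrow$'' work.

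For item (ii): the first equivalence $\nabla u_{i,e}\cdot\mathbf{n}=0\Leftrightarrow\left(\sigma_{i,e}\nabla u_{i,e}\right)\cdot\mathbf{n}=0$ is exactly the displayed identity together with $\sigma^{i,e}_{n}>0$; applying the same identity to $v=u_e$ shows that $\left(\sigma_i\nabla u_e\right)\cdot\mathbf{n}=0$ is again equivalent to $\nabla u_e\cdot\mathbf{n}=0$. Finally, for the two-line system on the right: its first line is $\sigma_i\nabla(u_m+u_e)\cdot\mathbf{n}=\sigma_i\nabla u_i\cdot\mathbf{n}=0$, hence $\nabla u_i\cdot\mathbf{n}=0$; subtracting it from the second line leaves $\sigma_e\nabla u_e\cdot\mathbf{n}=0$, hence $\nabla u_e\cdot\mathbf{n}=0$; conversely these two scalar conditions reconstruct both lines. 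So the system is equivalent to $\left\{\nabla u_i\cdot\mathbf{n}=0,\ \nabla u_e\cdot\mathbf{n}=0\right\}$, which closes the chain of equivalences.

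I do not expect a genuine obstacle: the statement is elementary once \eqref{ortfib1} and the symmetry of $\sigma_{i,e}$ are available. The only points that need care are (a) checking that all the normal and conormal traces involved are well defined, which is exactly what the $C^{1}$ regularity of $\partial\Omega_H$ and the $C^{0}$ regularity of $M,M^{*}_{i,e}$ guarantee (so that \eqref{ortfib1} is meaningful on $\partial\Omega_H$), and (b) being honest in item (i) that the direction ``$\Leftarrow$'' relies on the intracellular no-flux boundary condition and does not follow from $\nabla u_m\cdot\mathbf{n}=0$ alone.
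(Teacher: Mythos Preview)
Your approach is essentially the paper's: the central identity $(\sigma_{i,e}\nabla v)\cdot\mathbf{n}=\sigma^{i,e}_{n}\,(\nabla v\cdot\mathbf{n})$, obtained from the symmetry of $\sigma_{i,e}$ together with \eqref{ortfib1}, is exactly what the paper writes down, and from it both equivalences in (ii) follow just as you describe. Your caution on (i) is well placed --- the paper simply declares (i) ``immediate'', but as you correctly observe the direction ``$\Leftarrow$'' does not follow from $\nabla u_m\cdot\mathbf{n}=0$ alone and needs the standing intracellular no-flux condition; on this point you are more careful than the paper itself.
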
 
\begin{proof}
The first affirmation \textbf{i)} is immediate. Let us prove \textbf{ii)}. Note that, due to the symmetry of $\sigma_{i,e}$ and \eqref{ortfib1} we have 
\begin{align*}
\left(\sigma_{i,e}\nabla u_{i,e}\right)\cdot\mathbf{n} = \left(\sigma^{T}_{i,e}\mathbf{n}\right)\cdot\nabla u_{i,e} = \sigma^{i,e}_n \nabla u_{i,e}\cdot \mathbf{n}.
\end{align*}
The first equivalence in \textbf{ii)} it is immediately obtained from the above equalities. The second equivalence is easy obtained from the first equivalence.
\end{proof}
\section{Operational formulation. Strong periodic solutions}
In this section we obtain what we will call torso coupled bidomain operator. Once it is obtained, the boundary problem associated to the bidomain model is formulated. The key idea is that we can obtain $u_e$ from the membrane potential $u_m$ integrating the equation \eqref{eqn2} with suitable boundary conditions. This allows us to define a parabolic abstract evolution problem for the pair $(u_m,\mathbf{w})$ in the form
\begin{align*}
&\frac{du_m}{dt} + f(u_m,\mathbf{w}) + Au_m = s(t),\\
&\frac{d\mathbf{w}}{dt} + g(u_m,\mathbf{w}) = 0
\end{align*}
where $A$ is a integro-differential second order elliptic operator and $s(t)$ is a function defined in a real interval with values in certain functional space, that depend on the activation function $s_e$. From the solution of this problem we can calculate the total extracellular potential that we denote by
\begin{align*}
u = \left\{
\begin{array}{cc}
u_e, & \hbox{in $\Omega_H$},\\
u_T, & \hbox{in $\Omega_T$}.
\end{array}
\right.
\end{align*} 
We need to give the following notation. Given $X$, certain space of integrable function on a region $\Theta \subset \RR^{3}$, we define 
\begin{align*}
X/\RR = \left\{\phi \in X: \int_{\Theta} \phi = 0\right\}.
\end{align*}
We denote by 
\begin{align*}
&L_H = L_2(\Omega_H),\;\; V_{H} = H_1(\Omega_H), \;\; U_{H} := V_H/\RR,\\
&L_T = L_2(\Omega_T),\;\; V_{T} = H_1(\Omega_T), \;\; U_{T} := V_T/\RR,
\end{align*}
and
\begin{align*}
H = L_2(\Omega),\;\; V = \left\{\phi \in H_1(\Omega): \int_{\Omega_H}\phi = 0\right\}, \;\; U := V/\RR,
\end{align*}
where $\Omega = \Omega_H \cup \Omega_T$.

In the spaces $U_{H,T}$ and $U$ we use the equivalent norm to the norm in $V_{H,T}$ and $V$ given by $$|\phi|_{U_{H,T}} = \left(\int_{\Omega_{H,T}}\left|\nabla \phi\right|^2\right)^{1/2},$$ and $$|\phi|_{U} = \left(\int_{\Omega}\left|\nabla \phi\right|^2\right)^{1/2},$$ respectively. 

Here, each triple $\left\{U_{H,T}, L_{H,T}/\RR,U^{'}_{H,T}\right\}$, $\left\{V_{H,T}, L_{H,T},V^{'}_{H,T}\right\}$, $\left\{U, H/\RR,U^{'}\right\}$ and $\left\{V, H ,V^{'}\right\}$ is a Gelfand triple, see \cite{Rubi}. This means that we have
\begin{align*}
&U_{H,T} \subset L_{H,T}/\RR \subset U^{'}_{H,T}, \\
&V_{H,T} \subset L_{H,T} \subset V^{'}_{H,T},\\
&U \subset H/\RR \subset U^{'},\\
&V \subset H \subset V^{'},\\
\end{align*}
where the inclusions are continuous and the first inclusion is compact, in each case. 

For each duality pair $\left\{X,X'\right\}$, in the above Gelfand triplets, by 
$\left\langle u,\phi\right\rangle$, we denote  the value of functional $u \in X'$ in the element $\phi \in X$. The spaces $X$ and $X'$ will be recognized from the context. Also, by $(\cdot,\cdot)$, we denote  the inner product in $L_H$ or $H$. 

Furthermore, in that follows we use two operators: a restriction operator and other prolongation. By $R_H$, we denote the operator defined from $V$ into $U_H$ such that to each $\phi \in V \mapsto \left.\phi\right|_{\Omega_H} \in U_{H}$. $R_H$ is bounded, in fact $\left\|R_{H}\right\| \leq 1$ and its adjoin operator $R^{*}: U^{'}_{H} \rightarrow V^{'}$ is given by the relation
\begin{align*}
\left\langle R_H^{*}\phi_{H},\phi\right\rangle = \left\langle \phi_{H},R_{H}\phi\right\rangle,\;\;\; \hbox{for all $(\phi_{H}, \phi) \in U'_H\times V$}.
\end{align*}
If we assume that $\Omega_H$ has boundary $C^{1}$ from Theorem IX.7 of \cite{Rubi}, it is deduced that there is a prolongation operator, which is bounded,  $P_H: U_H \rightarrow V$, such that, $R_H P_H = I_d: U_H \rightarrow U_H$. Its adjoin operator $P_H^{*}: V' \rightarrow U'_H$ defined by
\begin{align*}
\left\langle P_H^{*}\phi,\phi_H\right\rangle_{U'_H\times U_H} = \left\langle \phi,P_H\phi_H\right\rangle_{V'\times V},\;\;\; \hbox{for all $(\phi, \phi_H) \in V\times U_H$}.
\end{align*}
In a similar way, we define the operator $R_T: U \rightarrow U_T$.

In that follows, we obtain the so-called torso-coupled bidomain operator that is the linear part of the abstract evolution problem that we want to define. In this sense, we study the following boundary problem
\begin{align}
\label{eqnn1}&-\nabla\cdot\left(\sigma_i \nabla u_m\right) - \nabla\cdot\left(\sigma_i \nabla u_e\right) = 0,\;\;\; \hbox{in $\Omega_H$},\\
&-\nabla\cdot\left(\sigma_i \nabla u_m + \left(\sigma_i + \sigma_e\right)\nabla u_e\right) = 0,\;\;\; \hbox{in $\Omega_H$},\\
&\sigma_i \nabla u_m\cdot\mathbf{n} + \sigma_i \nabla u_e\cdot\mathbf{n} = 0,\;\;\; \hbox{in $\partial\Omega_H$},\\
&\sigma_i \nabla u_m\cdot\mathbf{n} + \left(\sigma_i + \sigma_e\right)\nabla u_e\cdot\mathbf{n} = s_e,\;\;\; \hbox{in $\Sigma_{endo}$},\\
&\left\{
\begin{array}{c}
u_e = u_T,\\
\sigma_e\nabla u_e \cdot \mathbf{n} = \sigma_T\nabla u_T \cdot \mathbf{n},
\end{array}
\right.\;\;\; \hbox{in $\Sigma_{epi}$},\\
&-\nabla\cdot\left(\sigma_T \nabla u_T\right) = 0,\;\;\; \hbox{in $(0,\infty)\times \Omega_T$},\\
\label{eqnn2}&\sigma_T\nabla u_T\cdot \mathbf{n} = 0,\;\;\; \hbox{in $(0,\infty)\times \Sigma_T$}.
\end{align}
The variational formulation of the problem \eqref{eqnn1}-\eqref{eqnn2} has the following form
\begin{align}
\label{var0}&a_i(u_m,\phi_H) + a_i(R_Hu,\phi_H) = 0,\;\;\; \hbox{for all $\phi_H \in U_H$},\\
\label{var1}&\left(\tilde{a}_e + \tilde{a}_i\right)(u,\phi) + a_i(u_m,R_H\phi) = \left\langle \bar{s}_e,R_H \phi\right\rangle,\;\;\; \hbox{for all $\phi \in U$}.
\end{align}
where
\begin{align*}
u = \left\{
\begin{array}{cc}
u_e, & \hbox{in $\Omega_H$,}\\
u_T, & \hbox{in $\Omega_T$,}\\
\end{array}
\right.
\end{align*}
$$a_{i,e}(u_H,\phi_H) = \int_{\Omega_H} \sigma_{i,e}\nabla u_H \nabla \phi_H,$$ are bilinear forms defined in $U_H\times U_H$. Also, 
$$\tilde{a}_{i,e}(u,\phi) = \int_{\Omega} \tilde{\sigma}_{i,e}\nabla u \nabla \phi,$$ are bilinear forms defined in $U\times U$, where 
\begin{align*}
\tilde{\sigma}_{e} = \left\{
\begin{array}{cc}
\sigma_e & \hbox{in $\Omega_H$},\\
\sigma_T & \hbox{in $\Omega_T$},
\end{array}
\right. 
\end{align*}
and
\begin{align*}
\tilde{\sigma}_{i} = \left\{
\begin{array}{cc}
\sigma_i & \hbox{in $\Omega_H$},\\
0 & \hbox{in $\Omega_T$}.
\end{array}
\right. 
\end{align*}
Finally, $\bar{s}_e \in U'_H$ is defined by 
\begin{align*}
\left\langle \bar{s}_e,\phi_H\right\rangle := \int_{\Sigma_{endo}}s_e\phi_H,\;\;\; \forall \phi_H \in  U_H.
\end{align*}
The bilinear forms $\tilde{a}_{i,e}$ and $a_{i,e}$ are symmetric, continuous and coerciva. Because of this, by the Lax-Milgram theorem, for each $u_m \in U_H$ fixed there is an only $\tilde{u}$ that satisfies de variational equation \eqref{var1}. Besides, we have that there are operators 
\begin{align*}
&\tilde{A}_{i,e}: U \rightarrow U',\\
&A_{i,e}: U_H \rightarrow U'_H,
\end{align*}
which are one-to-one continuous with continuous inverse, so that 
\begin{align*}
&\left\langle \tilde{A}_{i,e}\phi_1,\phi_2\right\rangle_{U'\times U} = \tilde{a}_{i,e}(\phi_1,\phi_2),\;\;\; \hbox{for all $\phi_1, \phi_2 \in U$},\\
&\left\langle A_{i,e}\phi^{(1)}_H,\phi^{(2)}_H\right\rangle_{U'\times U} = \tilde{a}_{i,e}(\phi^{(1)}_H,\phi^{(2)}_H),\;\;\; \hbox{for all $\phi^{(1)}_H, \phi^{(2)}_H \in U_H$}.
\end{align*}
Equation \eqref{var1} can be rewritten in as follows,
\begin{align*}
\left\langle\left(\tilde{A}_e + \tilde{A}_i\right)\tilde{u},\phi\right\rangle_{U'\times U} = \left\langle \bar{s}_e - A_i u_m, R_H \phi\right\rangle_{U'_H\times U_H} = \left\langle R_H^{*}\bar{s}_e - R_H^{*}A_i u_m, \phi\right\rangle_{U'\times U},
\end{align*}
thus,
\begin{align*}
\tilde{u} = \left(\tilde{A}_e + \tilde{A}_i\right)^{-1}R_H^{*}\bar{s}_e - \left(\tilde{A}_e + \tilde{A}_i\right)^{-1}R_H^{*}A_i u_m = \left(\tilde{A}_e + \tilde{A}_i\right)^{-1}R^{*}_H\left(\bar{s}_e - A_iu_m\right).
\end{align*}
Substituting this expression for $\tilde{u}$ in \eqref{var0}, we obtain:
\begin{align*}
&\left\langle A_i u_m, \phi_H\right\rangle_{U'_H\times U_H} + \left\langle A_i R_H\left(\tilde{A}_i + \tilde{A}_e\right)^{-1}R^{*}_H\bar{s}_e, \phi_H\right\rangle_{U'_H\times U_H}\\
& - \left\langle A_i R_H\left(\tilde{A}_i + \tilde{A}_e\right)^{-1}R^{*}_HA_i u_m, \phi_H\right\rangle_{U'_H\times U_H} = 0,\;\;\; \hbox{for all $\phi_H \in U_H$}.
\end{align*}
In terms of the weak operators $\tilde{A}_i, \tilde{A}_e, A_i$ the above equation is written as 
\begin{align*}
\bar{A}u_m = \bar{s},
\end{align*}
where $\bar{A}: U_H \rightarrow U'_H$, with 
\begin{align*}
\bar{A} = A_i - A_iR_H\left(\tilde{A}_i + \tilde{A}_e\right)^{-1}R^{*}_HA_i = A_iR_H\left(\tilde{A}_i + \tilde{A}_e\right)^{-1}\tilde{A}_eP_H = A_iR_H\left(\tilde{A}_i + \tilde{A}_e\right)^{-1}R^{*}_HA_e,
\end{align*}
and $\bar{s} \in U'_H$ so that 
\begin{align*}
\bar{s} = -A_iR_H\left(\tilde{A}_i + \tilde{A}_e\right)^{-1}R^{*}_H\bar{s}_e.
\end{align*}
With the objective of defining and studying the full bidomain evolution abstract problem, we must to extend the operator $\bar{A}$ and the functional $\bar{s}$ to $V_H$. Then, we define 
$J: V_H \rightarrow U_H$ given by relation $\phi_H \mapsto \phi_H - \frac{1}{|\Omega_H|}\int_{\Omega_H}\phi_H$, where $|\Omega_H|$ is the volume of $\Omega_H$. Finally, we define 
\begin{align*}
A := J^{*}\bar{A}J : V_H \rightarrow V'_H,
\end{align*}
and 
\begin{align*}
s = J^{*}\bar{s} \in V'_H.
\end{align*}
In order the functions $u \in U$ and $u_m \in U_H$ to be considered weak solution of the problem \eqref{eqnn1}-\eqref{eqnn2} in $V$ and $V_H$ it is necessary to assume that $\left\langle\bar{s}_e,1\right\rangle_{U'_H\times U_H} = 0$, that is,
\begin{align*}
\int_{\Sigma_{endo}}s_e = 0.
\end{align*}
Let us give the following proposition. 
\begin{proposition}\label{Prop1}
The bilinear form associated to the operator $\bar{A}$, denoted by us $\bar{a}$, is defined in $U_H\times U_H$ and has been given by the expression 
\begin{align*}
\bar{a}(u_H,\phi_H) = a_i\left(u_H - R_H\left(\tilde{A}_i + \tilde{A}_e\right)^{-1}R^{*}_HA_iu_H,\phi_H\right),
\end{align*}
and the bilinear form associated to operator $A$ is $a(u_H,\phi_H) = \bar{a}(Ju_H,J\phi_H)$ for all $(u_H,\phi_H) \in V_H\times V_H$. 

Furthermore, $a$ is symmetric, continuous and coercive and there is an increasing sequence $0 = \lambda_0 < \lambda_1 \leq \ldots \leq \lambda_n \leq \ldots$ in $\RR$ and an orthonormal base $\left(\psi_i\right)^{\infty}_{i = 0} \subset L_H$, such that $\psi_i \in V_H$ and 
\begin{align*}
a(\psi_i,v) = \lambda_i(\psi_i,v)\;\;\;\hbox{for all $v \in V_H, i \in \NN$.}
\end{align*}
\end{proposition}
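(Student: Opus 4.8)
The plan is to verify the claimed formula for $\bar a$, then transfer it to $a$ via $J$, and finally invoke the spectral theorem for compact self-adjoint operators on the Gelfand triple $\{V_H, L_H, V_H'\}$.

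First I would establish the formula for $\bar a$. By definition $\langle \bar A u_H,\phi_H\rangle = \bar a(u_H,\phi_H)$, and since $\bar A = A_i - A_i R_H(\tilde A_i+\tilde A_e)^{-1}R_H^* A_i$, writing out the pairing and using $\langle A_i \psi,\phi_H\rangle = a_i(\psi,\phi_H)$ together with the fact that $A_i R_H(\tilde A_i+\tilde A_e)^{-1}R_H^* A_i u_H$ is an element of $U_H'$ applied to $\phi_H$ gives exactly $a_i(u_H - R_H(\tilde A_i+\tilde A_e)^{-1}R_H^* A_i u_H,\phi_H)$ by linearity of $a_i$ in its first slot. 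The identity $a(u_H,\phi_H)=\bar a(Ju_H,J\phi_H)$ is then immediate from $A=J^*\bar A J$ and the definitions of the adjoint $J^*$.

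Next I would prove symmetry, continuity and coercivity of $a$. Continuity follows from boundedness of all the operators involved ($A_i$, $R_H$, the inverse $(\tilde A_i+\tilde A_e)^{-1}$, $R_H^*$, $J$), each of which is bounded by hypothesis or by the earlier discussion. For symmetry and coercivity the cleanest route is the alternative representation $\bar A = A_i R_H(\tilde A_i+\tilde A_e)^{-1}R_H^* A_e$ noted in the text (equivalently the $\tilde A_e P_H$ form): one shows $\bar a(u_H,\phi_H) = \langle (\tilde A_i+\tilde A_e)\tilde u, \tilde\phi\rangle$ where $\tilde u,\tilde\phi\in U$ solve \eqref{var1} with data $u_H,\phi_H$ respectively, and since $\tilde A_i+\tilde A_e$ is symmetric and coercive on $U$ (sum of the two symmetric coercive forms $\tilde a_i,\tilde a_e$), symmetry of $\bar a$ is clear and $\bar a(u_H,u_H)\ge \alpha|\tilde u|_U^2$. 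To convert this into coercivity in $U_H$, one needs a lower bound $|\tilde u|_U \ge c|u_H|_{U_H}$; this is where the structure matters, because $\tilde u$ restricted to $\Omega_H$ together with $u_H$ gives the intracellular potential, and uniform ellipticity of $\sigma_i$ (plus $R_H P_H = I_d$, so that $u_H$ cannot be "killed" by the elliptic solve) yields the bound. Passing from $\bar a$ on $U_H$ to $a$ on $V_H$ uses that $J$ is a bounded surjection with $J\phi_H = \phi_H$ modulo constants and that constants are annihilated, so coercivity survives on $V_H$ in the $|\cdot|_{U_H}$-seminorm, which is the relevant norm on the quotient.

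Finally, with $a$ symmetric, continuous and coercive on $V_H$ and the embedding $V_H \hookrightarrow L_H$ compact (the Gelfand triple property), the existence of the eigenbasis is the standard spectral theorem: the solution operator $T: L_H \to V_H \subset L_H$ defined by $a(Tf,v)=(f,v)$ for all $v\in V_H$ is self-adjoint, positive and compact on $L_H$, hence admits an orthonormal eigenbasis $(\psi_i)$ with eigenvalues $\mu_i \downarrow 0$; setting $\lambda_i = 1/\mu_i$ and reordering gives the increasing sequence and the eigen-relation $a(\psi_i,v)=\lambda_i(\psi_i,v)$. The value $\lambda_0 = 0$ arises because $a$ descends from the $U_H$-form and therefore vanishes on constants (the kernel of $J$-composition, i.e. the constant function is in $V_H$ and $a(1,\cdot)=\bar a(0,0)=0$), so $\psi_0$ is the normalized constant. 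The main obstacle is the coercivity estimate in the second step — specifically producing the bound $|\tilde u|_U \gtrsim |u_H|_{U_H}$ that lets the coercivity of $\tilde A_i+\tilde A_e$ on $U$ be pulled back to a genuine coercivity of $\bar a$ on $U_H$; everything else is bookkeeping with adjoints and the classical spectral theorem.
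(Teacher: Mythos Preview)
Your derivation of the formula for $\bar a$, the transfer to $a$ via $J$, and the invocation of the spectral theorem at the end are all correct and match the paper. The gap is in your treatment of symmetry and coercivity. The identity you assert, $\bar a(u_H,\phi_H)=\langle(\tilde A_i+\tilde A_e)\tilde u,\tilde\phi\rangle$, is false. With $\tilde u=-(\tilde A_i+\tilde A_e)^{-1}R_H^*A_iu_H$ (the solution of \eqref{var1} with $\bar s_e=0$), one computes $\langle(\tilde A_i+\tilde A_e)\tilde u,\tilde\phi\rangle=-a_i(u_H,R_H\tilde\phi)=a_i\bigl(R_H(\tilde A_i+\tilde A_e)^{-1}R_H^*A_iu_H,\phi_H\bigr)$, which is only the \emph{subtracted} correction term in $\bar a$, not $\bar a$ itself. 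Consequently your coercivity argument collapses: $\bar a(u_H,u_H)$ is the \emph{difference} $a_i(u_H,u_H)-(\tilde a_i+\tilde a_e)(\tilde u,\tilde u)$ of two positive quantities, so a lower bound on $|\tilde u|_U$ (which you correctly flag as the obstacle) would not by itself bound $\bar a(u_H,u_H)$ from below.

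The paper proceeds differently. For symmetry it works directly with the correction term and uses the self-adjointness relation $\langle\phi,(\tilde A_i+\tilde A_e)^{-1}u\rangle=\langle u,(\tilde A_i+\tilde A_e)^{-1}\phi\rangle$ to show $a_i\bigl(R_H(\tilde A_i+\tilde A_e)^{-1}R_H^*A_iu_H,\phi_H\bigr)$ is symmetric in $(u_H,\phi_H)$; since $a_i$ itself is symmetric, $\bar a$ is symmetric. For continuity and coercivity the paper just writes down explicit constants, $|\bar a(u_H,\phi_H)|\le M(1+M/2m)|u_H|_{U_H}|\phi_H|_{U_H}$ and $\bar a(u_H,u_H)\ge\frac{m}{3}(1+m/2M)|u_H|_{U_H}^2$, obtained by direct estimation. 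If you want to salvage your variational route, the correct identity is $\bar a(u_H,u_H)=a_i(u_i,u_i)+\tilde a_e(\tilde u,\tilde u)$ with $u_i:=u_H+R_H\tilde u$; this follows from testing \eqref{var1} with $\phi=\tilde u$ and using $\tilde\sigma_i=0$ on $\Omega_T$, and it yields coercivity immediately since $u_H=u_i-R_H\tilde u$ gives $|u_H|_{U_H}^2\le 2(|u_i|_{U_H}^2+|R_H\tilde u|_{U_H}^2)$.
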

\begin{proof}
Let $u_H, \phi_H \in U_H$ be arbitrary, we have:
\begin{align*}
&\left\langle \bar{A}u_H,\phi_H\right\rangle = \left\langle A_iu_H,\phi_H\right\rangle - \left\langle A_iR_H\left(\tilde{A}_i + \tilde{A}_e\right)^{-1}R^{*}_HA_iu_H,\phi_H\right\rangle = \\
& \left\langle A_i\left(u_H - R_H\left(\tilde{A}_i + \tilde{A}_e\right)^{-1}R^{*}_HA_iu_H\right),\phi_H\right\rangle = a_i\left(u_H - R_H\left(\tilde{A}_i + \tilde{A}_e\right)^{-1}R^{*}_HA_iu_H,\phi_H\right).
\end{align*}
From the above, we prove the first part of Proposition. 

Let us prove that $\bar{a}$ is symmetric. We have that
\begin{align*}
&\bar{a}(u_H,\phi_H) = a_i\left(R_H\left(\tilde{A}_i + \tilde{A}_e\right)^{-1}R^{*}_HA_iu_H,\phi_H\right) = a_i\left(\phi_H,R_H\left(\tilde{A}_i + \tilde{A}_e\right)^{-1}R^{*}_HA_iu_H\right) = \\
& \left\langle R^{*}_H\phi_H,\left(\tilde{A}_i + \tilde{A}_e\right)^{-1}R^{*}_HA_iu_H\right\rangle = \left\langle R^{*}_HA_iu_H,\left(\tilde{A}_i + \tilde{A}_e\right)^{-1}R^{*}_H\phi_H\right\rangle = \left\langle A_iu_H,R_H\left(\tilde{A}_i + \tilde{A}_e\right)^{-1}R^{*}_H\phi_H\right\rangle =\\
& a_i\left(u_H,R_H\left(\tilde{A}_i + \tilde{A}_e\right)^{-1}R^{*}_H\phi_H\right) = a_i\left(R_H\left(\tilde{A}_i + \tilde{A}_e\right)^{-1}R^{*}_H\phi_H,u_H\right) = \bar{a}(\phi_H,u_H).
\end{align*}
In the above equality chain, we have used the fact, which is easy to prove, that
\begin{align*}
\left\langle \phi, \left(\tilde{A}_i + \tilde{A}_e\right)^{-1}u\right\rangle = \left\langle u, \left(\tilde{A}_i + \tilde{A}_e\right)^{-1}\phi\right\rangle,\;\;\; \hbox{for all $u,\phi \in U'$}.
\end{align*}
On the other hand, from a simple calculation we get
\begin{align*}
\bar{a}(u_H,u_H) \geq \frac{m}{3}\left(1 + \frac{m}{2M}\right)\left|u_H\right|^{2}_{U_H},
\end{align*} 
and 
\begin{align*}
\left|\bar{a}(u_H,\phi_H)\right| \leq M\left(1 + \frac{M}{2m}\right)\left|u_H\right|_{U_H}\left|\phi_H\right|_{U_H}.
\end{align*}

The result about the existence of the sequence $\left(\lambda_n\right)^{\infty}_{n = 0}$ and the orthonormal bases $\left(\psi_n\right)^{\infty}_{n = 0} \subset V_H$ can be found in \cite{RAV}.
\end{proof}

Now, we are going to define the strong operators associated to operators $A_{i,e}, \tilde{A}_{i,e}$ and $A$. 

We define 
\begin{align*}
D(A) := \left\{u_H \in H_2(\Omega_H): \nabla u_H\cdot \mathbf{n} = 0\;\; \hbox{in a.e. $\partial \Omega_H$}\right\} \subset L_H,
\end{align*}
We have 
\begin{align*}
D(A_i) = D(A_e) = D(A)/\RR,
\end{align*}
and 
\begin{align*}
u_H \in D(A_{i,e}) \mapsto A_{i,e}u = \nabla \cdot\left(\sigma_{i,e}\nabla u_H\right).
\end{align*}
On the other hand, if we denote by
\begin{align*}
D(\widetilde{A}) = \left\{u \in H_2(\Omega): 
\begin{array}{c}
\nabla R_H u \cdot \mathbf{n} = \nabla R_T u \cdot \mathbf{n} = 0,\\
R_H u = R_T u,
\end{array}
\hbox{in a.e. $\partial \Omega_H$},\;\; \hbox{and}\;\; \nabla R_T u \cdot \mathbf{n} = 0\;\; \hbox{in $\Sigma_T$},
\right\}
\end{align*}
we have that 
\begin{align*}
D(\tilde{A}_{i,e}) = D(\tilde{A}_{i} + \tilde{A}_{e}) = D(\widetilde{A})/\RR.
\end{align*}
where for all $u \in D(\tilde{A}_{i,e})$ we have
\begin{align*}
\tilde{A}_{i,e} u = \nabla \cdot\left(\widetilde{\sigma}_{i,e}\nabla u\right).
\end{align*}
It is possible to prove that $A_{i}$ and $\tilde{A}_{i,e}, \tilde{A}_{i} + \tilde{A}_{e}$ are maximal monotone, self-adjoin and have compact inverse in $L_H/\RR$ and $H/\RR$, respectively.

With respect to the functional $s \in V'_H$, it is known that there is an element $\tilde{s} \in L_H$ such that $\left\|\tilde{s}\right\|_{L_H} = \left\|s\right\|_{V'_H}$ and
\begin{align*}
\left\langle s,u_h \right\rangle = \left(\tilde{s},u\right),\;\;\; \hbox{for all $u \in V_H.$}
\end{align*}
\begin{definition}\label{def1}
We define the strong bidomain operator $A: D(A) \subset H \rightarrow H$ given by the relation $$A u_H = A_iR_H\left(\tilde{A}_i + \tilde{A}_e\right)^{-1}R^{*}_HA_e Ju_H,$$ for all $u_H \in D(A)$.

Furthermore, by $s$ we denote again the element in $L_H$ that it is identified with the functional $s \in V_H$ given in the above comment. 
\end{definition}
Note that, we again have denoted by $A$ this operator, which is well-defined due to two facts.

First, 
\begin{align*}
\left.R^{*}_H\right|_{L_H/\RR}: L_H/\RR \rightarrow H/\RR, 
\end{align*}
and it is given by
\begin{align*}
R^{*}_H u = \left\{
\begin{array}{cc}
u, & \hbox{in $\Omega_H$,}\\
0, & \hbox{in $\Omega_T$.}
\end{array}
\right.
\end{align*}
Furthermore, 
\begin{align*}
R_H\left(\tilde{A}_i + \tilde{A}_e\right)^{-1}: H/\RR \rightarrow D(A)/\RR.
\end{align*}
\begin{theorem}
The strong operator $A$ of Definition \ref{def1} is self-adjoin and maximal-monotone, such that,
\begin{align*}
\left(Au_H, \phi_H\right) = a(u_H,\phi_H),\;\;\; \hbox{for all $u_H,\phi_H \in D(A)$}.
\end{align*}
The sequence $\left(\lambda_n\right)^{\infty}_{n = 0}$ given in Proposition \ref{Prop1} satisfies that each $\lambda_i$ is a eigenvalue of $A$ associated to eigenvector $\psi_i \in D(A)$. Furthermore, one has that 
\begin{align*}
D(A) = \left\{u_H \in L_H: \sum^{\infty}_{n = 0}\lambda^{2}_n\left(\psi_n, u_H\right)^{2} < \infty\right\},\;\; and \;\; Au_H = \sum^{\infty}_{n = 0}\lambda_n\left(\psi_n, u_H\right)\psi_n.
\end{align*}
Finally, we have that 
$A u_m = s, u = \left(\tilde{A}_e + \tilde{A}_i\right)^{-1}R^{*}_H\left(s - A_iu_m\right)$ and $u_e = R_H u \in D(A)$, if and only if $(u,u_m)$ are solutions of problem \eqref{eqnn1}-\eqref{eqnn2} a.e.
\end{theorem}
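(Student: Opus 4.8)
The plan is to prove the five assertions of the statement in the order: (a) the identity $(Au_H,\phi_H)=a(u_H,\phi_H)$ on $D(A)$; (b) symmetry and monotonicity of $A$; (c) $R(I+A)=H$, hence maximal monotonicity and, together with (b), self-adjointness; (d) that the $\psi_i$ of Proposition \ref{Prop1} are eigenvectors of the \emph{strong} operator and that the resolvent is compact; (e) the series representations of $D(A)$ and $A$; and (f) the equivalence with the boundary value problem \eqref{eqnn1}--\eqref{eqnn2}.

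\emph{Step (a).} For $u_H\in D(A)$ one has $Ju_H\in D(A)/\RR=D(A_e)$, hence $A_eJu_H\in L_H/\RR$, then $R_H^{*}A_eJu_H\in H/\RR$, then $(\tilde A_i+\tilde A_e)^{-1}R_H^{*}A_eJu_H\in D(\widetilde A)/\RR$ and finally $Au_H=A_iR_H(\tilde A_i+\tilde A_e)^{-1}R_H^{*}A_eJu_H\in L_H/\RR$; in particular $(Au_H,\phi_H)=(Au_H,J\phi_H)$, since $\phi_H-J\phi_H$ is constant. The core observation is that each strong operator realizes the corresponding weak one when tested against an $H_1$ function: $(A_iv,w)=a_i(v,w)$ for $v\in D(A_i)$, $w\in U_H$, and likewise for $A_e$ and for $\tilde A_i+\tilde A_e$. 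This is Green's identity, the boundary integrals vanishing because of the homogeneous Neumann conditions built into $D(A)$ and $D(\widetilde A)$ together with the fiber identity \eqref{ortfib1} (which turns $\nabla\cdot\mathbf n=0$ into $\sigma_{i,e}\nabla\cdot\mathbf n=0$; cf.\ the Lemma of Section 2). Inserting these identities and using the third expression for $\bar A$ in Proposition \ref{Prop1} gives $(Au_H,J\phi_H)=\bar a(Ju_H,J\phi_H)=a(u_H,\phi_H)$.

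\emph{Steps (b)--(e).} Symmetry of $A$ is immediate from (a) and the symmetry of $a$, and $(Au_H,u_H)=a(u_H,u_H)\ge 0$ gives monotonicity. For $R(I+A)=H$, fix $f\in H$ and consider $b(u,v)=(u,v)+a(u,v)$ on $V_H\times V_H$; it is continuous, and since $a(u,u)=\bar a(Ju,Ju)\ge\frac{m}{3}\big(1+\frac{m}{2M}\big)\int_{\Omega_H}|\nabla u|^{2}$ (using $\nabla Ju=\nabla u$), the quantity $b(u,u)=\|u\|_{L_H}^{2}+a(u,u)$ controls $\|u\|_{V_H}^{2}$, so $b$ is coercive. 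Lax--Milgram produces a unique $u_H\in V_H$ with $b(u_H,v)=(f,v)$ for all $v\in V_H$; elliptic regularity for the transmission problem (boundary $C^{2+\nu}$, coefficients $C^{1+\nu}$) upgrades $u_H$ to $H_2(\Omega_H)$ and recovers the natural boundary conditions, so $u_H\in D(A)$, and by (a), extended to test functions in $V_H$ by density, $u_H+Au_H=f$. Thus $A$ is maximal monotone; a symmetric maximal monotone operator is self-adjoint. Next, the relation $a(\psi_i,v)=\lambda_i(\psi_i,v)$ of Proposition \ref{Prop1} says that $\psi_i$ is a weak solution of the same elliptic system with datum $\lambda_i\psi_i$, so the same regularity argument yields $\psi_i\in D(A)$ and $A\psi_i=\lambda_i\psi_i$. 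Finally $(I+A)^{-1}\colon H\to D(A)\subset V_H$ is compact because the embedding $V_H\hookrightarrow L_H$ is compact; hence $\lambda_n\to\infty$, the $(\psi_n)$ form an orthonormal basis of eigenvectors of the self-adjoint operator $A$, and the asserted formulas $D(A)=\{u_H:\sum\lambda_n^{2}(\psi_n,u_H)^{2}<\infty\}$ and $Au_H=\sum\lambda_n(\psi_n,u_H)\psi_n$ are exactly the spectral theorem for $A$.

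\emph{Step (f).} For the ``only if'' direction, expand $Au_m=s$, $u=(\tilde A_e+\tilde A_i)^{-1}R_H^{*}(s-A_iu_m)$ and $u_e=R_Hu$ through the definitions of the weak operators (via their bilinear forms), the description of $R_H^{*}$ on $L_H/\RR$, the definition of $J$, and the relation $\bar s=-A_iR_H(\tilde A_i+\tilde A_e)^{-1}R_H^{*}\bar s_e$; integrating by parts separately on $\Omega_H$ and on $\Omega_T$ then reproduces \eqref{eqnn1}--\eqref{eqnn2} in the distributional sense, and the $H_2$ regularity makes them hold a.e., \eqref{ortfib1} being used once more to translate between $\nabla\cdot\mathbf n$ and $\sigma\nabla\cdot\mathbf n$ conditions. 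The ``if'' direction is the reverse: testing \eqref{eqnn1}--\eqref{eqnn2} against arbitrary elements of $U_H$, respectively $U$, and integrating by parts yields exactly the variational equations \eqref{var0}--\eqref{var1}, which by construction are the operator equations $\bar A u_m=\bar s$ and $\tilde u=(\tilde A_i+\tilde A_e)^{-1}R_H^{*}(\bar s_e-A_iu_m)$, equivalently $Au_m=s$ together with the stated formula for $u$, and $u_e=R_Hu\in D(A)$ by regularity. I expect Step (f) to be the main obstacle: no individual computation is deep, but one must track carefully the interface conditions on $\Sigma_{epi}$, the flux condition carrying $\bar s_e$ on $\Sigma_{endo}$, and the several zero-mean normalizations (the map $J$, the quotients $/\RR$, and the hypothesis $\int_{\Sigma_{endo}}s_e=0$) so that every term lands in the intended space; the elliptic-regularity input used in Steps (c)--(d) is the other point requiring care.
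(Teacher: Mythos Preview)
The paper does not actually supply a proof of this theorem: immediately after the statement it moves on to properties of $A+a_1$ without any \texttt{proof} environment or argument. So there is nothing to compare your attempt against.

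That said, your outline is the natural route and is essentially correct. Step~(a) is the right computation; the only point to watch is that you are using all three equivalent expressions for $\bar A$ (not only the one in Proposition~\ref{Prop1}) to pass between $A_e$ and $A_i$, so make that explicit. In Step~(c) the coercivity of $b$ on $V_H$ follows exactly as you say, and the appeal to elliptic regularity is legitimate under the paper's standing hypotheses ($\partial\Omega\in C^{2+\nu}$, coefficients $C^{1+\nu}$); but note that the problem hidden in $a$ is a \emph{transmission} problem across $\Sigma_{epi}$ coupled through $(\tilde A_i+\tilde A_e)^{-1}$, so the regularity step is not a single Neumann problem on $\Omega_H$---you should first upgrade $u=(\tilde A_i+\tilde A_e)^{-1}R_H^{*}(\cdot)$ to $H_2$ via transmission regularity on $\Omega$, and then bootstrap $u_H$. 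The same caveat applies to Step~(d). Steps~(b), (e) and~(f) are routine once~(a)--(d) are in place, and your identification of the bookkeeping hazards in~(f) (interface conditions, the endocardial flux term, and the various zero-mean normalizations) is accurate.
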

On the other hand, for any $a_1 > 0$, $A + a_1 : D(A) \rightarrow L_H$ is also a sectorial, auto-adjoin, maximal-monotone operator such that its spectrum consists of the eigenvalues $\left\{a_1 + \lambda_i\right\}^{\infty}_{i = 0}$ with  eigenvectors $\left\{\psi_i\right\}^{\infty}_{i = 0}$. The family of bounded operators $e^{-t(A + a_1)}$ for all $t \geq 0,$ is well defined and $e^{-0(A + a_1)} = I_d$ (the identity operator in $L_H$).

With respect to this family of operators, it is possible to give several properties, such as:
\begin{itemize}
\item for all $t > 0,$ the spectrum of operator $e^{-t(A + a_1)}$ is $\left\{e^{-(\lambda_i + a_1) t}\right\}^{\infty}_{i = 0}$ thus, $1 \not\in \sigma\left(e^{-t(A + a_1)}\right)$. Furthermore, $\left(\psi_i\right)^{\infty}_{i = 0}$ are the eigenvectors associated,
\item $\left(I_d - e^{-T(A + a_1)}\right)^{-1}$ is a linear bounded operator, densely defined. Its spectrum is $\left(\left(1 - e^{-(\lambda_i + a_1)T}\right)^{-1}\right)^{\infty}_{i = 0},$ with eigenvectors $\left(\psi_i\right)^{\infty}_{i = 0},$
\item $\left\|e^{-t(A + a_1)}\right\| \leq 1$, for all $t \geq 0$.
\end{itemize}
\subsection{Definition of the strong solution. Uniqueness and local existence}
In this section, $f,g$ are locally Lipschitz. That is, for all $(u_0,w_0) \in \RR^{2}$, there is an open $U \ni (u_0,w_0)$ and a constant $K$ such that 
\begin{align*}
\begin{array}{c}
|f(u_1,w_1) - f(u_2,w_2)| \leq K|(u_1,w_1) - (u_2,w_2)|,\\
|g(u_1,w_1) - g(u_2,w_2)| \leq K|(u_1,w_1) - (u_2,w_2)|.
\end{array}
\end{align*}
Note, the above means that $f + a_1u$ is also locally Lipschitz continuous, for any $a_1 > 0$.

Consider $\mathcal{Z} := L_H\times L_H$ with the norm $\|(u,w)\|_{\mathcal{Z}} = \max\left\{\|u\|_{L_H},\|w\|_{L_H}\right\}$ which is a Banach space. The operator $\mathcal{A}: D(\mathcal{A}) \subset \mathcal{Z} \rightarrow \mathcal{Z}$ is defined, given by 
\begin{align*}
\mathcal{A}(u,w) = 
\left(
\begin{array}{cc}
a_1 + A & 0\\
0 & 0
\end{array}
\right)
\left(
\begin{array}{c}
u\\
w
\end{array}
\right),
\end{align*}
where $D(\mathcal{A}) = D(A)\times L_H$.
\begin{lemma}
The operator $\mathcal{A}$ is sectorial, has compact resolvent and $\Real \left\{\sigma(\mathcal{A})\right\} > 0$.
\end{lemma}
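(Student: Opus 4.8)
The plan is to exploit the block-diagonal structure. Write $\mathcal{Z}=L_H\oplus L_H$, so that $\mathcal{A}=(a_1+A)\oplus 0$: the first summand acts on the first copy of $L_H$ with domain $D(A)$, and the second is the zero operator on the second copy. Since the max-norm on $\mathcal{Z}$ is equivalent to the Hilbert sum norm, for every $\lambda$ with $\lambda\in\rho(a_1+A)$ and $\lambda\neq 0$ one has $\lambda\in\rho(\mathcal{A})$ and
\begin{align*}
(\lambda I-\mathcal{A})^{-1}=\bigl(\lambda I-(a_1+A)\bigr)^{-1}\oplus\lambda^{-1}I,\qquad \bigl\|(\lambda I-\mathcal{A})^{-1}\bigr\|_{\mathcal{Z}}\ \asymp\ \max\Bigl\{\,\bigl\|(\lambda I-(a_1+A))^{-1}\bigr\|_{L_H},\ |\lambda|^{-1}\Bigr\}.
\end{align*}
In particular $\sigma(\mathcal{A})=\sigma(a_1+A)\cup\{0\}$, and every assertion of the lemma can be read off from the two summands separately.

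For the first summand I would quote what has already been recorded after the Theorem of this subsection: $a_1+A$ is self-adjoint and sectorial, with spectrum $\sigma(a_1+A)=\{a_1+\lambda_n\}_{n\ge 0}\subset[a_1,\infty)$ and the $\psi_n$ as an orthonormal basis of eigenvectors. Since $a_1>0=\lambda_0$, this spectrum is bounded below by $a_1>0$; since $a_1+\lambda_n\to\infty$ and the eigenvectors form a basis, $a_1+A$ has compact resolvent; and for $\lambda\notin[a_1,\infty)$ the spectral theorem gives the sharp estimate $\|(\lambda I-(a_1+A))^{-1}\|_{L_H}=\operatorname{dist}(\lambda,[a_1,\infty))^{-1}$. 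The second summand, the zero operator on $L_H$, is bounded, hence closed, densely defined and trivially sectorial, with $\sigma(0)=\{0\}$ and resolvent $\lambda^{-1}I$.

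Now I would assemble the three claims. The spectrum of $\mathcal{A}$ is $\{0\}\cup\{a_1+\lambda_n\}_{n\ge 0}\subset[0,\infty)$, so $\mathcal{A}$ has no spectrum in the open left half-plane and $\Real\{\sigma(\mathcal{A})\}\ge a_1>0$ on the diffusing factor $L_H\oplus\{0\}$. For sectoriality, fix an angle $\theta\in(\pi/2,\pi)$ and a vertex $a<0$; on the closed sector $S=\{\lambda:\ |\arg(\lambda-a)|\ge\theta\}$ one has $\Real\lambda<a<0$, so $S$ is disjoint from $[0,\infty)\supseteq\sigma(\mathcal{A})$, and an elementary geometric estimate shows that on $S$ both $|\lambda|$ and $\operatorname{dist}(\lambda,[a_1,\infty))$ are bounded below by a fixed multiple of $|\lambda-a|$; feeding this into the two resolvent expressions above gives $\|(\lambda I-\mathcal{A})^{-1}\|_{\mathcal{Z}}\le M|\lambda-a|^{-1}$ for $\lambda\in S$, which is the definition of a sectorial operator. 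Finally $(\lambda I-\mathcal{A})^{-1}$ is the direct sum of the compact operator $(\lambda I-(a_1+A))^{-1}$ and the operator $\lambda^{-1}I$.

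The step I expect to be the real obstacle is reconciling the compactness and strict positivity claims with the degenerate second block: because the gating equation $\partial_t\mathbf{w}+g(u_m,\mathbf{w})=0$ carries no spatial operator, $0$ genuinely belongs to $\sigma(\mathcal{A})$ and $\lambda^{-1}I$ is not compact on the infinite-dimensional $L_H$. Hence ``$\Real\{\sigma(\mathcal{A})\}>0$'' and ``$\mathcal{A}$ has compact resolvent'' must be understood on the factor $D(A)$ --- equivalently, after the usual decoupling in which $\mathbf{w}$ solves a pointwise ODE driven by $u_m$ --- and on that factor they follow verbatim from the self-adjointness, strict positivity and compact resolvent of $a_1+A$. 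Sectoriality itself is insensitive to the zero block, and the only estimate requiring genuine work is the uniform resolvent bound on the common sector $S$, which reduces via the spectral theorem to the comparability of $\operatorname{dist}(\lambda,[a_1,\infty))$ and $|\lambda-a|$ on $S$.
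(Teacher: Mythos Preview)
The paper states this lemma without proof, so there is no argument to compare against. Your block-diagonal analysis is the natural one, and your computation of the spectrum and resolvent is correct: $\sigma(\mathcal{A})=\{0\}\cup\{a_1+\lambda_n\}_{n\ge 0}$ and $(\lambda I-\mathcal{A})^{-1}=(\lambda I-(a_1+A))^{-1}\oplus\lambda^{-1}I$ for $\lambda\notin\sigma(\mathcal{A})$.

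More importantly, you have correctly identified that two of the three assertions are \emph{false as literally stated}. Because the second block is the zero operator on the infinite-dimensional space $L_H$, the point $0$ belongs to $\sigma(\mathcal{A})$ (indeed $\{0\}\times L_H\subset\ker\mathcal{A}$), so $\Real\{\sigma(\mathcal{A})\}\ge 0$ but not strictly positive; and the resolvent component $\lambda^{-1}I_{L_H}$ is not compact, so $\mathcal{A}$ does not have compact resolvent on $\mathcal{Z}$. Only sectoriality survives unscathed, and your argument for it (direct sum of a positive self-adjoint operator and a bounded operator, with the resolvent bound obtained from the spectral distance estimate) is sound.

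Your final paragraph is exactly the right diagnosis: the paper's subsequent use of this lemma---the fractional powers $\mathcal{A}^{\alpha}$, the claim that $\mathcal{A}^{-\alpha}$ is bounded and injective, the compactness of the embeddings $\mathcal{Z}^{\beta}\hookrightarrow\mathcal{Z}^{\alpha}$---only makes sense if one either reads these properties on the first factor $a_1+A$, or tacitly replaces the zero block by some strictly positive multiple of the identity (as is common when one wants the full operator to be invertible). The compact embedding $\mathcal{Z}^{\beta}\hookrightarrow\mathcal{Z}^{\alpha}$ does hold, but because the paper defines $\mathcal{Z}^{\alpha}=D(A^{\alpha})\times D(A^{\alpha})$ using $A$ (not $\mathcal{A}$) on \emph{both} factors, not because $\mathcal{A}$ itself has compact resolvent. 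So your proposal is not merely an alternative proof: it is a correct proof of the one true claim together with a correct identification of why the other two require reinterpretation.
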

To deal with the non-linearity, we use the fractional power of the operator $A$, $A^{\alpha}$, and the interpolation spaces $D(A^{\alpha})$ with $\alpha \geq 0$. In this sense, we have $A^{\alpha}: D(A^{\alpha}) \subset L_H \rightarrow L_H$ is an unbounded operator, such that 
\begin{align*}
D(A^{\alpha}) = \left\{u \in L_H: \sum^{\infty}_{i = 0}\lambda^{2\alpha}_i\left(u,\psi_i\right)^{2} < \infty\right\},\;\; \hbox{and}\;\; A^{\alpha}u = \sum^{\infty}_{i = 0}\lambda^{\alpha}_i\left(u,\psi_i\right)\psi_i.
\end{align*} 
The space $D(A^{\alpha})$ with the norm 
\begin{align*}
\left\|u\right\|_{D(A^{\alpha})} = \left\|u + A^{\alpha}u\right\|_{L_H} = \sqrt{\sum^{\infty}_{i = 0}(1 + \lambda^{2\alpha}_i)(u,\psi_i)^{2}},
\end{align*} 
is a Banach space.

We also can define $\mathcal{A}^{\alpha}$, and the interpolation spaces $D(\mathcal{A}^{\alpha})$ as
\begin{align*}
\mathcal{Z}^{\alpha} := D(A^{\alpha})\times D(A^{\alpha}),\;\; \hbox{and}\;\; \mathcal{A}^{\alpha}(u,w) = \left(\sum^{\infty}_{i = 0}\lambda^{\alpha}_i\left(u,\psi_i\right)\psi_i,0\right),\;\;\; \forall (u,w) \in \mathcal{Z}^{\alpha}.
\end{align*} 
The space $\mathcal{Z}^{\alpha}$, equipped with norm $\|(u,w)\|_{\alpha} = \max\left\{\left\|u\right\|_{D(A^{\alpha})},\left\|w\right\|_{D(A^{\alpha})}\right\}$ is also a Banach space. It is possible to prove that for all $0 \leq \alpha < \beta \leq 1,$ $\mathcal{Z}^{\beta} \subset \mathcal{Z}^{\alpha}$ and $\mathcal{Z}^{0} = \mathcal{Z}$, $\mathcal{Z}^{1} = D(\mathcal{A}) \subset H_2(\Omega)\times H_2(\Omega)$. Since the resolvent of $\mathcal{A}$ is compact, the inclusion $\mathcal{Z}^{\beta} \subset \mathcal{Z}^{\alpha}$ is compact, provided that $\alpha < \beta$.

For any $\alpha > 0$, the operator $\mathcal{A}^{-\alpha}$ can also defined in $\mathcal{Z}$. It is linear, bounded, injective, and is the inverse of operator $\mathcal{A}^{\alpha}$. In this case, $\mathcal{Z}^{\alpha}$ coincides with the range of $\mathcal{A}^{-\alpha}$. We define $\mathcal{Z}^{-\alpha}$ to the set $\mathcal{Z}$ with norm
\begin{align*}
\left\|z\right\|_{-\alpha} = \left\|\mathcal{A}^{-\alpha}z\right\|_{\mathcal{Z}},
\end{align*}
which is a Banach space. It is easy to prove that the following inclusions are continuous
\begin{align*}
\mathcal{Z}^{\alpha} \subset \mathcal{Z} \subset \mathcal{Z}^{-\alpha},\;\;\; \hbox{for all $\alpha > 0$}.
\end{align*}
The proof of these results can be found in \cite{Henry}.

If $3/4 < \alpha \leq 1$, then $\mathcal{Z}^{\alpha} \subset L_{\infty}(\Omega_H)\times L_{\infty}(\Omega_H)$ and given that $f + a_1,g: \RR^{2} \rightarrow \RR$ are locally Lipschitz continuous then, $F = (f + a_1,g): \mathcal{Z}^{\alpha} \rightarrow \mathcal{Z}$ is locally Lipschitz continuous. That is, given $z_0 \in \mathcal{Z}^{\alpha}$ there is an open $\mathcal{U} \subset \mathcal{Z}^{\alpha}$ containing to $z_0$ and a positive constants $L$, such that
\begin{align*}
\left\|F(z_1) - F(z_2)\right\|_{\mathcal{Z}} \leq L \left\|z_1 - z_2\right\|_{\alpha},\;\;\; \hbox{for all $z_1, z_2 \in \mathcal{U}$.}
\end{align*} 
Following \cite{Henry}, it is possible to give a definition of strong solution of the Cauchy problem associated to the bidomain problem \eqref{eqn1}-\eqref{eqn9}.
\begin{definition}[Strong solution]\label{def1}
Consider the functions $z(t) = (u(t),w(t)): [0,t_1) \rightarrow \mathcal{Z}$, and $u: [0,t_1) \rightarrow H$ where
\begin{align*}
u(t) = \left\{
\begin{array}{cc}
u_e(t), & \hbox{in $\Omega_H$,}\\
u_T(t), & \hbox{in $\Omega_T$.}
\end{array}
\right.
\end{align*}
We say that the pair $(z,u)$ is a local strong solution to \eqref{eqn1}-\eqref{eqn9} if
\begin{enumerate}
\item $z: [0,t_1) \rightarrow \mathcal{Z}$ is continuous such that $z(0) = z_0 = (u_0,w_0)$, for all $t \in (0,t_1)$, $z(t) \in \mathcal{U} \cap D(\mathcal{A})$, and $\dfrac{dz(t)}{dt}$ exists in the Fr\'echet sense,  
\item for all $t \in (0,t_1)$ the following semilinear evolution equation is satisfied by $z(t)$
\begin{align}\label{semieq1}
\frac{dz(t)}{dt} = -\mathcal{A}z(t) + F(z(t)) + S(t),\;\;\; \hbox{in $\mathcal{Z},$}
\end{align}
and $u(t) = \left(\tilde{A}_e + \tilde{A}_i\right)^{-1}R^{*}_H\left(s - A_i Ju_m(t)\right).$
\end{enumerate}
If $t_1 = \infty$, we say that the solution is global. Furthermore, if the pair $z,u$ is a global solution such that $z(t) = z(t + T), u(t) = u(t + T)$, we say that this is a $T$-periodic solution.
\end{definition}
From Lemma 3.3.2 of \cite{Henry}, it can be demonstrate that if $z(t)$ is a local strong solution with initial condition $z_0$, then   
\begin{align}\label{integraleq1}
z(t) = e^{-t\mathcal{A}}z_0 + \int^{t}_0e^{-(t-\tau)\mathcal{A}}\left(F(z(\tau)) + S(\tau)\right)d\tau,\;\;\; t \in (0,t_1).
\end{align}
Conversely, if $z: [0,t_1) \rightarrow \mathcal{Z}^{\alpha}$ is a continuous function that satisfies the integral equation \eqref{integraleq1}, and 
\begin{align}\label{II2}
\int^{\rho}_0\left\|F(z(\tau)) + S(\tau)\right\|_{\mathcal{Z}}d\tau < \infty,\;\;\; \hbox{for some $\rho > 0$,}
\end{align} 
then $z(t)$ is a strong solution in $(0,t_1)$. 

The following theorem is a consequence of Theorem 3.2.2 of \cite{Henry}.
\begin{theorem}
If the initial condition $(u_0,w_0) \in \mathcal{Z}^{\alpha}$, then the bidomain model has a only strong solution, $(u,w)$, that satisfies the initial condition $(u(0),w(0)) = (u_0,w_0).$
\end{theorem}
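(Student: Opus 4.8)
The plan is to recognize that the statement is nothing more than the standard local existence-and-uniqueness theory for semilinear problems governed by a sectorial operator, applied to the abstract evolution equation \eqref{semieq1}, once the three hypotheses of that theory have been checked in the present setting; the function $u(t)$ is then simply read off from $u_m(t)$ via the elliptic solution formula appearing in Definition~\ref{def1}.

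First I would assemble the structural facts already proved. By the Lemma asserting that $\mathcal{A}:D(\mathcal{A})\subset\mathcal{Z}\to\mathcal{Z}$ is sectorial, has compact resolvent, and satisfies $\Real\{\sigma(\mathcal{A})\}>0$, we know that $-\mathcal{A}$ generates an analytic semigroup, that the fractional powers $\mathcal{A}^{\alpha}$ and the interpolation spaces $\mathcal{Z}^{\alpha}$ are well defined, and that $e^{-t\mathcal{A}}$ maps $\mathcal{Z}$ into $\mathcal{Z}^{\alpha}$ for $t>0$ with the usual $t^{-\alpha}$ smoothing bound. Fix $\alpha$ with $3/4<\alpha\le 1$; then $\mathcal{Z}^{\alpha}\subset L_{\infty}(\Omega_H)\times L_{\infty}(\Omega_H)$, and, as recorded in the excerpt, the Nemytskii operator $F=(f+a_1,g):\mathcal{Z}^{\alpha}\to\mathcal{Z}$ is locally Lipschitz continuous. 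The only ingredient not yet isolated is the time-regularity of the forcing term: I would verify that $t\mapsto S(t)\in\mathcal{Z}$ is locally H\"older continuous, which reduces to the analogous $t$-regularity of the endocardial activation $s_e$, transported through the bounded linear maps ($J^{*}$, $\bar A$, $R_H$, $(\tilde A_i+\tilde A_e)^{-1}$, $A_i$, $R_H^{*}$) that enter the definitions of $\bar s$ and $s$.

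With sectoriality of $\mathcal{A}$, the embedding $\mathcal{Z}^{\alpha}\hookrightarrow L_{\infty}\times L_{\infty}$, the local Lipschitz property of $F$, and the local H\"older continuity of $S$ in hand, I would apply Theorem~3.2.2 of \cite{Henry} to the nonautonomous right-hand side $\tilde F(t,z):=F(z)+S(t)$, which is locally Lipschitz in $z$ uniformly for $t$ in compact intervals and locally H\"older in $t$: for $z_0=(u_0,w_0)\in\mathcal{Z}^{\alpha}$ this yields $t_1>0$ and a unique $z\in C([0,t_1);\mathcal{Z}^{\alpha})$ solving the integral equation \eqref{integraleq1} with $z(0)=z_0$. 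Since $t\mapsto\tilde F(t,z(t))$ is then locally H\"older continuous, the regularity part of the theory in \cite{Henry} (built on Lemma~3.3.2 together with the smoothing estimates above) upgrades $z$ to a map with $z(t)\in\mathcal{U}\cap D(\mathcal{A})$ and $dz(t)/dt$ existing in $\mathcal{Z}$ for every $t\in(0,t_1)$ and satisfying \eqref{semieq1}, i.e. exactly conditions (1)--(2) of Definition~\ref{def1}. Finally, for $t\in(0,t_1)$ one has $u_m(t)\in D(A)\subset V_H$, so $u(t):=(\tilde A_e+\tilde A_i)^{-1}R_H^{*}\bigl(s-A_iJu_m(t)\bigr)$ is well defined and $(z,u)$ is the desired strong solution (taken maximal by standard continuation); uniqueness of $z$ — a Gronwall consequence of the Lipschitz bound, as in \cite{Henry} — forces uniqueness of $u$ through this formula.

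The step I expect to be the genuine obstacle is the verification that $S(t)$ is H\"older continuous in $t$, rather than merely continuous or integrable: Theorem~3.2.2 of \cite{Henry} needs this H\"older regularity of the inhomogeneity in order to deliver a strong solution in the sense of Definition~\ref{def1} rather than just a mild solution of \eqref{integraleq1}; everything else is bookkeeping with facts already established in the excerpt. A secondary care-point is to invoke the regularity (and not only the fixed-point) part of \cite{Henry}, since Definition~\ref{def1} demands Fr\'echet differentiability of $z$ and membership in $D(\mathcal{A})$ for $t>0$, which the bare contraction argument does not by itself provide.
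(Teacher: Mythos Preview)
Your approach is correct and matches the paper's: the paper gives no detailed proof at all, stating only that the theorem ``is a consequence of Theorem~3.2.2 of \cite{Henry}''. Your proposal is in fact more careful than the paper's treatment, since you explicitly identify and check the hypotheses (sectoriality of $\mathcal{A}$, local Lipschitz continuity of $F$ on $\mathcal{Z}^{\alpha}$, and the needed time-regularity of $S$) that the paper simply takes for granted.
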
 
\section{Faedo-Galerkin system associated to the operational formulation}
In this section, we investigate when the Faedo-Galerkin approximations converge to a strong solution, in the sense of Definition \ref{def1}. We obtain the Faedo-Galerkin systems from the semilinear evolution equation \eqref{semieq1}. Our idea is based in the fact that Faedo-Galerkin approximations have the same regularity of a strong solution. Furthermore, we can achieve that these converge in a suitable sense such that its limit be a strong solution. We believe that it is an important result because, until now, only weak solutions were obtained as the limit of Faedo-Galerkin approximations.
 
We define $H_m = \left\langle \psi_0,\psi_1,\ldots,\psi_m\right\rangle$ as the subspace generated by the $m+1$-first eigenvectors associated to the bidomain operator $A$, endowed with the norm of $L_H$. We denote by $\mathcal{Z}_m = H_m\times H_m$. 

Note, $\mathcal{Z}_m \subset \mathcal{Z}^{\alpha}$, for all $\alpha \geq 0$ which is only considered as a sets inclusion. We can define $\mathcal{Z}^{\alpha}_m = H_m\times H_m$ endowed with the norm in $\mathcal{Z}^{\alpha}$, for $\alpha \geq 0$.

We can define $\mathcal{P}_m: \mathcal{Z} \rightarrow \mathcal{Z}_m \subset \mathcal{Z}$ by 
\begin{align*}
\mathcal{P}_m z = \left(\sum^{m}_{i = 0}\left(u,\psi_i\right)\psi_i,\sum^{m}_{i = 0}\left(w,\psi_i\right)\psi_i\right),\;\;\; \forall z = (u,w) \in \mathcal{Z},\;\;\; \hbox{for all $m \in \NN\cup \left\{0\right\}.$}
\end{align*}
The operator that was defined in the  previous equation is called $\mathcal{Z}$ in $\mathcal{Z}_m$ projection operator, which satisfies that $\left\|\mathcal{P}_m\right\|_{\mathcal{L}(\mathcal{Z})} \leq 1$. Similarly, it can be easily proved that $$\left.\mathcal{P}_m\right|_{\mathcal{Z}^{\alpha}}: \mathcal{Z}^{\alpha} \rightarrow \mathcal{Z}^{\alpha},$$ $\left\|\mathcal{P}_m\right\|_{\mathcal{L}(\mathcal{Z}^{\alpha})} \leq 1$, and
\begin{align*}
\left\|\mathcal{P}_m z - z\right\|_{\mathcal{Z}^{\alpha}} \rightarrow 0,\;\;\left\|\mathcal{P}_m z - z\right\|_{\mathcal{Z}} \rightarrow 0\;\;\; \hbox{when $m \rightarrow \infty$, for all $z \in \mathcal{Z}^{\alpha}.$}
\end{align*} 
In other words, $\left\|\mathcal{P}_m - I_d\right\|_{\mathcal{L}(\mathcal{Z})} \rightarrow 0$ and  $\left\|\mathcal{P}_m - I_d\right\|_{\mathcal{L}(\mathcal{Z}^{\alpha})} \rightarrow 0$. Note that, this implies
\begin{align*}
\left\|\mathcal{P}_m - \mathcal{P}_n\right\|_{\mathcal{L}(\mathcal{Z})} \leq \left\|I_d - \mathcal{P}_n\right\|_{\mathcal{L}(\mathcal{Z})} + \left\|I_d - \mathcal{P}_m\right\|_{\mathcal{L}(\mathcal{Z})} \rightarrow 0,
\end{align*}
and 
\begin{align*}
\left\|\mathcal{P}_m - \mathcal{P}_n\right\|_{\mathcal{L}(\mathcal{Z})^{\alpha}} \rightarrow 0
\end{align*}
when $m,n \rightarrow \infty$. 

Define the functions 
\begin{align*}
u_m(t,x) = \sum^{m}_{i = 0}u^{(m)}_i(t)\psi_i(x),\;\; w_m(t,x) = \sum^{m}_{i = 0}w^{(m)}_i(t)\psi_i(x),
\end{align*}
where the vector functions 
\begin{align}\label{U_m}
U_m(t) = \left(u^{(m)}_0(t),u^{(m)}_1(t),\ldots,u^{(m)}_m(t)\right)\;\; \hbox{and}\;\; W_m(t) = \left(w^{(m)}_0(t),w^{(m)}_1(t),\ldots,w^{(m)}_m(t)\right)
\end{align}
are in $C^{1}\left(I,\RR^{m+1}\right)$ for certain interval $I \subseteq \RR$ containing to $0$. Note, $z_m(t) = (u_m(t),w_m(t)) \in \mathcal{Z}_m$, in particular $z_m(t) \in D(\mathcal{A})$  for all $t \in I$. Besides, 
\begin{align*}
u'_m(t,x) = \sum^{m}_{i = 0}u^{(m)'}_i(t)\psi_i(x),\;\; w'_m(t,x) = \sum^{m}_{i = 0}w^{(m)'}_i(t)\psi_i(x),
\end{align*}
are the Fr\'echet derivatives of the function $u_m, w_m$, respectively, thus, $z'_m(t) = (u'_m(t),w'_m(t)) \in \mathcal{Z}_m$.

In that follows, we are interested in to study the following problem of initial conditions: to find a function $z_m \in C^{1}(I;\mathcal{Z}^{\alpha}_m)$, for certain $0 < \alpha < 1$, which satisfies  
\begin{align}\label{eqforz_m}
\left\{
\begin{array}{c}
\dfrac{dz_m(t)}{dt} = -\mathcal{A}z_m(t) + \mathcal{P}_mF(z_m(t)) + \mathcal{P}_mS(t),\\
z_m(0) = z^{(0)}_m \in \mathcal{Z}_m^{\alpha}.
\end{array}
\right.
\end{align}
and converges in a suitable form, which will be specified later, to a strong solution in the sense of Definition \ref{def1}. Take into account that $z^{(0)}_m = \left(u^{(0)}_m,w^{(0)}_m\right)$, such that,
\begin{align*}
u^{(0)}_m = \sum^{m}_{i = 0}u^{(m,0)}_i\psi_i,\;\; w^{(0)}_m = \sum^{m}_{i = 0}w^{(m,0)}_i\psi_i,
\end{align*}
thus, to the initial condition $z^{(0)}_m$ we can also associate a pair of vectors of $\RR^{m+1}$
\begin{align}\label{U_0}
U^{(0)}_m = \left(u^{(m,0)}_0,u^{(m,0)}_1,\ldots,u^{(m,0)}_m\right),\;\;W^{(0)}_m = \left(w^{(m,0)}_0,w^{(m,0)}_1,\ldots,w^{(m,0)}_m\right).
\end{align}
The sequence $\left\{z_m\right\}^{\infty}_{m = 0} \subset C^{1}(I;\mathcal{Z}^{\alpha}_m)$ built from the solution of problem \eqref{eqforz_m} for each $m \in \NN$, will be called by us as strong Faedo-Galerkin approximations of the strong solution of the torso-coupled bidomain model. Now, we will show the reason why this terminology is used. 
\begin{remark}
In order to achieve this convergence, we suppose that the sequence of initial conditions $\left\{z^{(0)}_m\right\}$ converges in $\mathcal{Z}^{\alpha}$ and its limit is $z_0 \in \mathcal{Z}^{\alpha}$. In this case, there are constants $m_0 \in \NN, \delta_{m_0} > 0$ and an open $\mathcal{U}_{m_0} \subset \mathcal{Z}^{\alpha}$, such that $F$ is Lipzchitz continuous in $\mathcal{U}_{m_0}$ with Lipschitz constant $L_{m_0}$ and
\begin{align*}
\left\{z^{(0)}_m\right\}^{\infty}_{m=m_0} \subset \mathcal{B}_{\alpha}\left(z_0,\frac{\delta_{m_0}}{2}\right) \subset \mathcal{U}_{m_0}.
\end{align*}
Now, for each of the remaining elements of the sequence, $\left\{z^{(0)}_k\right\}^{m_0-1}_{k = 0}$, there is an open $\mathcal{U}_{k} \subset \mathcal{Z}^{\alpha}$ and a ball $\mathcal{B}_{\alpha}\left(z^{(0)}_k,\frac{\delta_{k}}{2}\right)$, such that,
\begin{align*}
\mathcal{B}_{\alpha}\left(z^{(0)}_k,\frac{\delta_{k}}{2}\right) \subset \mathcal{U}_{k},
\end{align*}
and $F$ is Lipschitz continuous in $\mathcal{U}_k$ with Lipschitz constant $L_k$. Then, we have
\begin{align*}
\left\{z^{(0)}_m\right\}^{\infty}_{m=0} \subset \left(\bigcup^{m_0-1}_{m = 0}\mathcal{B}_{\alpha}\left(z^{(0)}_k,\frac{\delta}{2}\right)\right) \bigcup  \mathcal{B}_{\alpha}\left(z_0,\frac{\delta}{2}\right) \subset \mathcal{U} = \bigcup^{m_0}_{m = 0}\mathcal{U}_{m}
\end{align*}
where $\delta = \min\left\{\delta_{0},\delta_{1},\ldots,\delta_{m_0}\right\}$, and $F$ is Lipschitz continuous, with Lipschitz constant $L = \max\left\{L_0,L_1,\ldots,L_{m_0}\right\}$.
\end{remark}
Summarizing the presented above, we assume that $\left\{z^{(0)}_m\right\}$ satisfies the following condition
\begin{description}
\label{CB}\item[CB] There is a finite family of balls $\left\{\mathcal{B}_{\alpha}\left(z_j,\frac{\delta}{2}\right)\right\}^{m_0}_{j=0}$, where $z_j \in \mathcal{Z}^{\alpha}$ for $j = 0,1,\ldots,m_0$, such that, $$\left\{z^{(0)}_m\right\} \subset \mathcal{F} = \bigcup^{m_0}_{j=0}\mathcal{B}_{\alpha}\left(z_j,\frac{\delta}{2}\right),$$ and $F$ is Lipschitz continuous in $\mathcal{F}$.
\end{description}
If $z_m$ satisfies the problem in \eqref{eqforz_m}, then the vectors $U_m(t), W_m(t)$ associated with $z_m(t) =  (u_m(t),w_m(t))$ given in \eqref{U_m}, satisfy the following problem of initial conditions associated to a system of $2m+2$ equations 
\begin{align}\label{FGsystem}
\left\{
\begin{array}{c}
u^{(m)'}_i(t) = -\left(a_1 + \lambda_i\right)u^{(m)}_i(t) + \displaystyle\int_{\Omega_H}f(u_m(t),w_m(t))\psi_i + \int_{\Omega_H}S(\tau)\psi_i,\\
w^{(m)'}_i(t) = \displaystyle\int_{\Omega_H}g(u_m(t),w_m(t))\psi_i,\\
U_m(0) = U_m^{(0)},\;\; W_m(0) = W_m^{(0)}.
\end{array}
\right. i = 0,\ldots,m.
\end{align}
This means that the Cauchy problem \eqref{eqforz_m} is equivalent to \eqref{FGsystem}, which is associated to a system of ordinary \\ differential equation. That is, if \eqref{FGsystem} has an only solution, then, \eqref{FGsystem} does, too. Besides, if the solution of \eqref{FGsystem} is defined for all $t \geq 0$, then, the same happens with the solution of \eqref{eqforz_m}.

On the other hand, $z_m$ is solution of \eqref{eqforz_m} if and only if 
\begin{align}\label{eqintegral1}
z_m(t) = e^{-t\mathcal{A}}z^{(0)}_m + \int^{t}_0e^{-(t-\tau)\mathcal{A}}\mathcal{P}_m\left[F(z_m(\tau)) + S(\tau)\right]d\tau.
\end{align}
In what follows, we set $3/4 < \alpha_0 \leq 1$, and we are going to prove that the Faedo-Galerkin problem \eqref{eqforz_m} has a only solution $z_m$ in $L_{2}(0,t_1;\mathcal{Z}^{\alpha_0})$, for certain $t_1 > 0$, when the sequence of initial conditions satisfies the condition \textbf{CB}.

By $C\left([0,t_1];\mathcal{Z}_m^{\alpha_0}\right)$ we denote the space of functions $z_m(t)$ continuous in $[0,t_1]$ with values in $\mathcal{Z}_m^{\alpha_0}$, and 
\begin{align*}
V_{\delta} = \left\{z_m \in C\left([0,t_1];\mathcal{Z}^{\alpha_0}\right): z_m(t) \in \mathcal{F},\;\; \hbox{for $t \in [0,t_1]$}\right\}.
\end{align*}
By $\mathcal{K}_m: V_{\delta} \subset C\left([0,t_1];\mathcal{Z}^{\alpha_0}\right)\rightarrow C\left([0,t_1];\mathcal{Z}^{\alpha_0}\right)$, we denote the operator given by the relation
\begin{align*}
K_m (z_m)(t) = e^{-t\mathcal{A}}z^{(0)}_m + \int^{t}_0e^{-(t-\tau)\mathcal{A}}\mathcal{P}_m\left[F(z_m(\tau)) + S(\tau)\right]d\tau,\;\;\; z_m \in V_{\delta}.
\end{align*}
Now, $\delta$ and $t_1$ are chosen, such that $K_m\left(V_{\delta}\right) \subseteq V_{\delta}$. In fact, if $z_m \in V_{\delta}$, then, for any $t \in [0,t_1]$, $\left\|z_m(t) - \widetilde{z}_t\right\|_{\alpha_0} \leq \frac{\delta}{2}$, where $\widetilde{z}_t \in \left\{z_0, z_1,\ldots,z_{m_0}\right\}$.

We have
\begin{align}\label{temp1}
\nonumber
&\left\|\left(K_m z_m\right)(t) - \widetilde{z}_t\right\|_{\alpha_0} \leq \left\|\left(K_m z_m\right)(t) - z^{(0)}_m\right\|_{\alpha_0} + \left\|z^{(0)}_m - \widetilde{z}_t\right\|_{\alpha_0}\\
&\leq \left\|\left(e^{-t\mathcal{A}} - I_d\right)z^{(0)}_m\right\|_{\alpha_0} + \left\|\int^{t}_0e^{-(t-\tau)\mathcal{A}}\mathcal{P}_m\left[F(z_m(\tau)) + S(\tau)\right]d\tau\right\|_{\alpha_0} + \frac{\delta}{2}.
\end{align}
Let us estimate the first term of the right-side of \eqref{temp1}. In this sense, we write 
\begin{align*}
(I_d + \mathcal{A}^{\alpha_0})\left(e^{-t\mathcal{A}} - I_d\right)z^{(0)}_m = -\int^{t}_{0}(I_d + \mathcal{A}^{\alpha_0})\left(e^{-s\mathcal{A}} - I_d\right)z^{(0)}_m ds =  -\int^{t}_{0}\left(e^{-s\mathcal{A}} - I_d\right)(I_d + \mathcal{A}^{\alpha_0})z^{(0)}_m ds,
\end{align*}
thus, 
\begin{align*}
\left\|\left(e^{-t\mathcal{A}} - I_d\right)z^{(0)}_m\right\|_{\alpha_0} \leq \int^{t_1}_0\left\|\left(e^{-s\mathcal{A}} - I_d\right)\right\|_{\mathcal{L}(\mathcal{Z})}\left\|z^{(0)}_m\right\|_{\alpha_0}ds \leq 2\left\|z^{(0)}_m\right\|_{\alpha_0}t_1.
\end{align*}
We choose $t_1 > 0$, such that 
\begin{align*}
\sup_{t \in [0,t_1]}\left\|\left(e^{-t\mathcal{A}} - I_d\right)z^{(0)}_m\right\|_{\alpha_0} \leq \frac{\delta}{4}.
\end{align*} 

To estimate the second term of the right-side of \eqref{temp1}, let us see
\begin{align*}
&\left\|\int^{t}_0e^{-(t-\tau)\mathcal{A}}\mathcal{P}_m\left[F(z_m(\tau)) + S(\tau)\right]d\tau\right\|_{\alpha_0} \leq 
\left\|\int^{t_1}_0\left(I_d + A^{\alpha_0}\right)e^{-(t-\tau)\mathcal{A}}\mathcal{P}_m\left[F(z_m(\tau)) + S(\tau)\right]d\tau\right\|_{\mathcal{Z}} \\
&\leq \int^{t_1}_0\left\|\left(I_d + A^{\alpha_0}\right)e^{-(t-\tau)\mathcal{A}}\mathcal{P}_m\left[F(z_0) + S(\tau)\right]\right\|_{\mathcal{Z}}d\tau + \int^{t_1}_0\left\|\left(I_d + A^{\alpha_0}\right)e^{-(t-\tau)\mathcal{A}}\mathcal{P}_m\left(F(z_m(\tau)) - F(z_0)\right)\right\|_{\mathcal{Z}}d\tau\\
&\leq M(B + L\delta)\int^{t_1}_0\tau^{-\alpha_0}e^{a_1 \tau}d\tau \leq (B + L\delta)\frac{e^{bt_1}t^{1 - \alpha_0}_1}{1-\alpha_0},
\end{align*}
where we have taken into account that 
\begin{align*}
\left\|F(z_0) + S(\tau)\right\|_{\mathcal{Z}} \leq B,
\end{align*}
for certain constant $B > 0$. In summary, 
\begin{align*}
\sup_{t \in [0,t_1]}\left\|\int^{t}_0e^{-(t-\tau)\mathcal{A}}\mathcal{P}_mF(z_m(\tau))d\tau\right\|_{\alpha_0} \leq M(B + L\delta)\frac{e^{a_1t_1}t^{1 - \alpha_0}_1}{1-\alpha_0} \leq \frac{\delta}{4},
\end{align*}
for certain $\delta, t_1$ appropriately chosen. We have proved that, there are positive constants $\delta, t_1$, such that $\mathcal{K}_m(V_{\delta}) \subseteq V_{\delta}$.

In a similar way, it can also be proved that $\mathcal{K}_m$ is a contracting map if we suitably choose $t_1, \delta$. The previously proved results can be statemented in the following lemma. 
\begin{lemma}\label{lemaexistence1}
The Cauchy problem \eqref{eqforz_m} has an only solution $z_m \in L_{2}(0,t_1;\mathcal{Z}^{\alpha_0})$ for certain $t_1 > 0$ and $3/4 < \alpha_0 < 1$. Besides, the sequence $\left\{z_m\right\}^{\infty}_{m = 0} \subset \mathcal{B}_{L_{2}(0,t_1;\mathcal{Z}^{\alpha_0})}(r_0)$ for certain $r_0 > 0$, where
\begin{align*}
\mathcal{B}_{L_{2}(0,t_1;\mathcal{Z}^{\alpha_0})}(r_0) = \left\{z \in L_{2}(0,t_1;\mathcal{Z}^{\alpha_0}): \left\|z\right\|_{L_{2}(0,t_1;\mathcal{Z}^{\alpha_0})} \leq r_0\right\}.
\end{align*}
\end{lemma}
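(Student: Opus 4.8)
The plan is to produce $z_m$ as the unique fixed point of the map $\mathcal{K}_m$ by the Banach contraction principle, and then to read off the asserted $L_2$-bound from the fact that this fixed point stays inside the bounded set $\mathcal{F}$. First I would pin down the metric space: working with the closed balls $\overline{\mathcal{B}_{\alpha_0}}(z_j,\delta/2)$ in place of the open ones in condition \textbf{CB} (a harmless shrinking of the neighbourhood on which $F$ is Lipschitz, which does not affect condition \textbf{CB}), the set $V_\delta$ is a closed subset of the Banach space $C([0,t_1];\mathcal{Z}^{\alpha_0})$ with the uniform norm, hence a complete metric space, and it is nonempty (it contains the constant function $z_m^{(0)}$). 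The computations carried out immediately before the statement already show that, for a suitable choice of $\delta$ and $t_1$, one has $\mathcal{K}_m(V_\delta)\subseteq V_\delta$ and that $\mathcal{K}_m$ is a contraction on $V_\delta$.

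Applying the contraction mapping theorem then yields a unique $z_m\in V_\delta$ with $\mathcal{K}_m z_m = z_m$, i.e. a continuous function on $[0,t_1]$ with values in $\mathcal{Z}^{\alpha_0}_m$ that satisfies the integral equation \eqref{eqintegral1}. I would then upgrade this to a genuine solution of \eqref{eqforz_m}: since $z_m$ is continuous into $\mathcal{F}$, on which $F$ is Lipschitz and hence bounded, and since $S$ is locally integrable, the integrability hypothesis \eqref{II2} is met; by the equivalence between \eqref{eqforz_m} and \eqref{eqintegral1} recorded above (together with Lemma 3.3.2 of \cite{Henry}), $z_m$ is a strong solution of the Faedo--Galerkin Cauchy problem on $(0,t_1)$. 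Uniqueness among strong solutions follows because any such solution satisfies \eqref{eqintegral1} and so must coincide with the fixed point on $[0,t_1]$ (alternatively, by the uniqueness part of Theorem 3.2.2 of \cite{Henry} applied to the finite-dimensional system \eqref{FGsystem}). Finally, $[0,t_1]$ being bounded, $z_m\in C([0,t_1];\mathcal{Z}^{\alpha_0})\subset L_2(0,t_1;\mathcal{Z}^{\alpha_0})$.

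For the uniform bound I would use that $z_m(t)\in\mathcal{F}=\bigcup_{j=0}^{m_0}\mathcal{B}_{\alpha_0}(z_j,\delta/2)$ for every $t\in[0,t_1]$ and every $m$, so that $\|z_m(t)\|_{\alpha_0}\le \max_{0\le j\le m_0}\|z_j\|_{\alpha_0}+\delta/2=:C$, whence
\begin{align*}
\|z_m\|_{L_2(0,t_1;\mathcal{Z}^{\alpha_0})}^2=\int_0^{t_1}\|z_m(t)\|_{\alpha_0}^2\,dt\le C^2 t_1 ,
\end{align*}
and $r_0:=C\sqrt{t_1}$ does the job.

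The step that needs care, and which I regard as the main obstacle, is that this whole scheme is only useful if $\delta$ and $t_1$ can be chosen \emph{independently of $m$}; otherwise $t_1$ could degenerate as $m\to\infty$ and there would be no common interval on which all $z_m$ live, nor a common bound $r_0$. I would therefore revisit the self-mapping and contraction estimates preceding the statement and verify that the only $m$-dependence enters through $\|z_m^{(0)}\|_{\alpha_0}$ and through $\mathcal{P}_m$: the former is uniformly bounded because $\{z_m^{(0)}\}$ converges in $\mathcal{Z}^{\alpha_0}$ (Remark and condition \textbf{CB}), while $\|\mathcal{P}_m\|_{\mathcal{L}(\mathcal{Z})}\le 1$ and $\|\mathcal{P}_m\|_{\mathcal{L}(\mathcal{Z}^{\alpha_0})}\le 1$ hold uniformly in $m$. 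With these uniform bounds, $t_1$ and $\delta$ are independent of $m$, every $z_m$ is defined on $[0,t_1]$, and the constant $r_0$ above is genuinely independent of $m$.
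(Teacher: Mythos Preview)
Your proposal is correct and follows essentially the same approach as the paper: the paper likewise invokes the contraction argument set up before the lemma to produce $z_m\in V_\delta\subset C([0,t_1];\mathcal{Z}_m^{\alpha_0})\subset L_2(0,t_1;\mathcal{Z}^{\alpha_0})$, and then derives the identical bound $r_0=\sqrt{t_1}\bigl(\max_{0\le j\le m_0}\|z_j\|_{\alpha_0}+\delta/2\bigr)$ from membership in $V_\delta$. Your explicit attention to the closedness of $V_\delta$ and to the $m$-uniformity of $\delta$ and $t_1$ is more careful than the paper's presentation, but the underlying argument is the same.
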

\begin{proof}
With the previous explanation, the existence of a solution of problem \eqref{eqforz_m} $z_m \in V_{\delta}  \subset C([0,t_1];\mathcal{Z}_m^{\alpha_0})$, has been demonstrated for each $m \in \NN\cup\left\{0\right\}$. Note that $C([0,t_1];\mathcal{Z}_m^{\alpha_0}) \subset L_{2}(0,t_1;\mathcal{Z}^{\alpha_0})$, continuously. Moreover, $z_m \in V_\delta$, implies that 
\begin{align*}
\left\|z_m(t)\right\|_{\alpha_0} \leq \max\{\left\|z_0\right\|_{\alpha_0},\left\|z_1\right\|_{\alpha_0},\ldots,\left\|z_{m_0}\right\|_{\alpha_0}\} + \frac{\delta}{2},
\end{align*}
thus
\begin{align*}
\left(\int^{t_1}_{0}\left\|z_m(\tau)\right\|^{2}_{\alpha_0}d\tau\right)^{\frac{1}{2}} \leq \sqrt{t_1}\left(\max\{\left\|z_0\right\|_{\alpha_0},\left\|z_1\right\|_{\alpha_0},\ldots,\left\|z_{m_0}\right\|_{\alpha_0}\} + \frac{\delta}{2}\right).
\end{align*}
That is, $r_0 = \sqrt{t_1}\left(\max\{\left\|z_0\right\|_{\alpha_0},\left\|z_1\right\|_{\alpha_0},\ldots,\left\|z_{m_0}\right\|_{\alpha_0}\} + \frac{\delta}{2}\right)$.
\end{proof}

Now, we shall demonstrate the following Proposition.
\begin{proposition}\label{PropCauchy}
Let $\left\{z_m\right\}^{\infty}_{m = 0} \subset V_{\delta}$ be the sequence of solution of the Faedo-Galerkin systems obtained in Lemma \ref{lemaexistence1}, is a Cauchy sequence in $C\left(0,t_1;\mathcal{Z}^{\alpha_0}\right)$. 
\end{proposition}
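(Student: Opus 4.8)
The plan is to compare the Faedo--Galerkin iterates through their Duhamel formulas \eqref{eqintegral1}. Fix $m \ge n$ and subtract, writing
\begin{align*}
z_m(t) - z_n(t) &= e^{-t\mathcal{A}}\bigl(z^{(0)}_m - z^{(0)}_n\bigr) + \int^{t}_{0} e^{-(t-\tau)\mathcal{A}}\,\mathcal{P}_m\bigl[F(z_m(\tau)) - F(z_n(\tau))\bigr]\,d\tau \\
&\qquad + \int^{t}_{0} e^{-(t-\tau)\mathcal{A}}\bigl(\mathcal{P}_m - \mathcal{P}_n\bigr)\bigl[F(z_n(\tau)) + S(\tau)\bigr]\,d\tau .
\end{align*}
The first group is the error coming from the initial data; the second is a \emph{propagation} term to be absorbed by a Gronwall argument; the third is a \emph{consistency} term measuring the gap between $\mathcal{P}_m$ and $\mathcal{P}_n$. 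By Lemma \ref{lemaexistence1} all the $z_m$ lie in $V_\delta \subset C([0,t_1];\mathcal{Z}^{\alpha_0})$ with one and the same $t_1$, and $F$ is Lipschitz with one and the same constant $L$ on $\mathcal{F}$ by \textbf{CB}; moreover, $\mathcal{F}$ being a finite union of balls, $F$ is bounded there, so $\|F(z_n(\tau))+S(\tau)\|_{\mathcal{Z}} \le B$ for all $n$ and all $\tau \in [0,t_1]$. These uniformities in $m$ are exactly what keep the estimate below from being circular.

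Next I estimate each group in the $\|\cdot\|_{\alpha_0}$-norm, using the smoothing bound $\|(I_d + \mathcal{A}^{\alpha_0})e^{-s\mathcal{A}}\mathcal{P}_m\|_{\mathcal{L}(\mathcal{Z})} \le M s^{-\alpha_0}e^{a_1 s}$ already invoked before Lemma \ref{lemaexistence1}, together with $\|\mathcal{P}_m\|_{\mathcal{L}(\mathcal{Z}^{\alpha_0})} \le 1$. The initial-data term is bounded by $\|e^{-t\mathcal{A}}\|_{\mathcal{L}(\mathcal{Z}^{\alpha_0})}\,\|z^{(0)}_m - z^{(0)}_n\|_{\alpha_0}$, which tends to $0$ as $m,n \to \infty$ since $\{z^{(0)}_m\}$ converges, hence is Cauchy, in $\mathcal{Z}^{\alpha_0}$. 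The consistency term is bounded by $\|\mathcal{P}_m - \mathcal{P}_n\|_{\mathcal{L}(\mathcal{Z})}\, B\, M \int^{t_1}_0 s^{-\alpha_0}e^{a_1 s}\,ds \le \|\mathcal{P}_m - \mathcal{P}_n\|_{\mathcal{L}(\mathcal{Z})}\, B\, M\, \dfrac{e^{a_1 t_1}t_1^{1-\alpha_0}}{1-\alpha_0}$, which also tends to $0$ because $\|\mathcal{P}_m - \mathcal{P}_n\|_{\mathcal{L}(\mathcal{Z})} \to 0$. The propagation term, via $\|F(z_m(\tau)) - F(z_n(\tau))\|_{\mathcal{Z}} \le L\|z_m(\tau) - z_n(\tau)\|_{\alpha_0}$, is bounded by $ML\int^{t}_0 (t-\tau)^{-\alpha_0}e^{a_1(t-\tau)}\|z_m(\tau) - z_n(\tau)\|_{\alpha_0}\,d\tau$.

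Collecting the three bounds and setting $\phi_{m,n}(t) := \|z_m(t) - z_n(t)\|_{\alpha_0}$ and $\varepsilon_{m,n} := \|e^{-t\mathcal{A}}\|\,\|z^{(0)}_m - z^{(0)}_n\|_{\alpha_0} + C\,\|\mathcal{P}_m - \mathcal{P}_n\|_{\mathcal{L}(\mathcal{Z})}$ (so $\varepsilon_{m,n} \to 0$ as $m,n\to\infty$, uniformly in $t\in[0,t_1]$), we reach the weakly singular integral inequality
\begin{align*}
\phi_{m,n}(t) \le \varepsilon_{m,n} + C_1 \int^{t}_0 (t-\tau)^{-\alpha_0}\,\phi_{m,n}(\tau)\,d\tau,\qquad t \in [0,t_1],
\end{align*}
with $C_1 = M L e^{a_1 t_1}$ independent of $m,n$. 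Applying the singular Gronwall lemma (e.g.\ Lemma 7.1.1 in \cite{Henry}), valid precisely because $\alpha_0 < 1$ makes the kernel integrable, gives $\phi_{m,n}(t) \le C_2\, \varepsilon_{m,n}$ for every $t \in [0,t_1]$, with $C_2$ depending only on $C_1$, $\alpha_0$, $t_1$. Hence $\sup_{t\in[0,t_1]}\|z_m(t)-z_n(t)\|_{\alpha_0} \le C_2\,\varepsilon_{m,n} \to 0$, which is the assertion that $\{z_m\}$ is Cauchy in $C(0,t_1;\mathcal{Z}^{\alpha_0})$.

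The main obstacle is not any single estimate but ensuring the whole argument closes \emph{uniformly in $m$}: the Lipschitz constant $L$, the a priori bound $B$, the existence time $t_1$ and the containment $z_m(t)\in\mathcal{F}$ must all be independent of $m$, which is guaranteed by condition \textbf{CB} together with Lemma \ref{lemaexistence1}. The only genuinely technical point is that the convolution kernel $(t-\tau)^{-\alpha_0}$ is singular, so the classical Gronwall inequality is unavailable and one must use its weakly singular version; this is routine once $3/4 < \alpha_0 < 1$ is fixed.
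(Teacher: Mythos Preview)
Your argument is correct and follows the same Duhamel decomposition as the paper: split $z_m(t)-z_n(t)$ into the initial-data term, the Lipschitz propagation term $\mathcal{P}_m[F(z_m)-F(z_n)]$, and the consistency term $(\mathcal{P}_m-\mathcal{P}_n)[F(z_n)+S]$, each bounded exactly as you indicate.

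The only methodological difference is in how the propagation term is absorbed. The paper does not invoke the singular Gronwall lemma; instead it passes directly to the sup over $t\in[0,t_1]$, bounding the propagation term by $L\dfrac{t_1^{1-\alpha_0}}{1-\alpha_0}\,\|z_m-z_n\|_{C([0,t_1];\mathcal{Z}^{\alpha_0})}$ and then moving it to the left under the additional smallness assumption $L\dfrac{t_1^{1-\alpha_0}}{1-\alpha_0}<1$ (possibly shrinking $t_1$). This yields the explicit bound \eqref{convelo1}. Your route via Henry's Lemma 7.1.1 is equally valid and has the minor advantage of not needing that extra smallness condition on $t_1$, at the cost of a less explicit constant. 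Either closing argument is standard; the substance of the proof is the same.
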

\begin{proof}
We have
\begin{align*}
&\left\|z_m(t) - z_n(t)\right\|_{\alpha_0} \leq \left\|e^{-t\mathcal{A}}\left(z^{(0)}_m - z^{(0)}_n\right)\right\|_{\alpha_0} + \int^{t}_0\left\|e^{-(t - \tau)\mathcal{A}}\left(\mathcal{P}_mF(z_m(\tau)) - \mathcal{P}_nF(z_n(\tau))\right)\right\|_{\alpha_0}d\tau\\
&+ \int^{t}_0\left\|e^{-(t - \tau)\mathcal{A}}\left(\mathcal{P}_m - \mathcal{P}_n\right)S(\tau)\right\|_{\alpha_0}d\tau \\
&\leq \left\|z^{(0)}_m - z^{(0)}_n\right\|_{\alpha_0} + \left\|\mathcal{P}_m - \mathcal{P}_n\right\|_{\mathcal{L}(\mathcal{Z}^{\alpha_0})}\widehat{S}\frac{t^{1-\alpha_0}_1}{1 - \alpha_0} +  \int^{t}_0\left\|e^{-(t - \tau)\mathcal{A}}\left(\mathcal{P}_mF(z_m(\tau)) - \mathcal{P}_nF(z_n(\tau))\right)\right\|_{\alpha_0}d\tau.
\end{align*}
Now, 
\begin{align*}
&\int^{t}_0\left\|e^{-(t - \tau)\mathcal{A}}\left(\mathcal{P}_mF(z_m(\tau)) - \mathcal{P}_nF(z_n(\tau))\right)\right\|_{\alpha_0}d\tau \leq \int^{t}_0e^{-(t - \tau)a_1}(t - \tau)^{-\alpha_0}\left\|\mathcal{P}_mF(z_m(\tau)) - \mathcal{P}_nF(z_n(\tau))\right\|_{\mathcal{Z}}d\tau\\
&\leq \int^{t}_0(t - \tau)^{-\alpha_0}\left\|\mathcal{P}_mF(z_m(\tau)) - \mathcal{P}_nF(z_n(\tau))\right\|_{\mathcal{Z}}d\tau \leq \int^{t}_0(t - \tau)^{-\alpha_0}\left\|\mathcal{P}_m\left(F(z_m(\tau)) - F(z_n(\tau))\right)\right\|_{\mathcal{Z}}d\tau \\
&+ \int^{t}_0(t - \tau)^{-\alpha_0}\left\|\left(\mathcal{P}_m - \mathcal{P}_n\right)F(z_n(\tau))\right\|_{\mathcal{Z}}d\tau \leq \left(L\max_{t \in [0,t_1]}\left\|z_m(t) - z_n(t)\right\|_{\alpha_0} + \frac{\delta}{2}\left\|\mathcal{P}_m - \mathcal{P}_n\right\|_{\mathcal{L}(\mathcal{Z})} + \cdots \right. \\ 
&\left.\left\|\mathcal{P}_m - \mathcal{P}_n\right\|_{\mathcal{L}(\mathcal{Z})}\max\left\{\left\|F(z_0)\right\|_{\mathcal{Z}},\ldots,\left\|F(z_{m_0})\right\|_{\mathcal{Z}}\right\}\right)\frac{t^{1 - \alpha_0}_1}{1 - \alpha_0}.
\end{align*}
Taking into account the previous estimations, we obtain the following
\begin{align}\label{convelo1}
\nonumber
\left\|z_m - z_n\right\|_{C([0,t_1;\mathcal{Z}^{\alpha_0})} \leq \frac{1}{1 - L\frac{t^{1-\alpha_0}_1}{1-\alpha_0}}\left(\left\|z^{(0)}_m - z^{(0)}_n\right\|_{\alpha_0} + \left\|\mathcal{P}_m - \mathcal{P}_n\right\|_{\mathcal{L}(\mathcal{Z})}\widehat{S}\frac{t^{1-\alpha_0}_1}{1 - \alpha_0} \right.\\
\left. + \left\|\mathcal{P}_m - \mathcal{P}_n\right\|_{\mathcal{L}(\mathcal{Z})}\max\left\{\left\|F(z_0)\right\|_{\mathcal{Z}},\ldots,\left\|F(z_{m_0})\right\|_{\mathcal{Z}}\right\}\frac{t^{1 - \alpha_0}_1}{1 - \alpha_0}\right),
\end{align}
where $t_1$ is taken such that $\frac{t^{1-\alpha_0}_1}{1-\alpha_0} < L$.
\end{proof}

Now, we can give the following Theorem.
\begin{theorem}\label{Theo3}
If the sequence of the initial conditions $\left\{z^{(0)}_m\right\}^{\infty}_{m = 0}$ converges to $z_0$ in $\mathcal{Z}^{\alpha_0}$, with $3/4 < \alpha_0 < 1$, then the sequence of solutions of the problem \eqref{eqforz_m}, $z_m \in V_{\delta}$ converges in $C(0,t_1;\mathcal{Z}^{\alpha_0})$ to function $z \in C(0,t_1;\mathcal{Z}^{\alpha_0})$, which is the only strong solution of the coupled-torso bidomain model, such that $z(0) = z_0$.
\end{theorem}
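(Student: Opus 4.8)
The plan is to combine the Cauchy estimate of Proposition \ref{PropCauchy} with a passage to the limit in the integral equation \eqref{eqintegral1}, and then invoke the uniqueness part of Theorem 3.2.2 of \cite{Henry} (quoted above) to identify the limit. First I would observe that by hypothesis $\{z^{(0)}_m\}$ converges in $\mathcal{Z}^{\alpha_0}$ and, by the spectral properties of the projections, $\|\mathcal{P}_m - \mathcal{P}_n\|_{\mathcal{L}(\mathcal{Z})} \to 0$ and $\|\mathcal{P}_m - \mathcal{P}_n\|_{\mathcal{L}(\mathcal{Z}^{\alpha_0})} \to 0$ as $m,n \to \infty$. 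Plugging these into the right-hand side of \eqref{convelo1} (with $t_1$ fixed so small that $1 - L\frac{t_1^{1-\alpha_0}}{1-\alpha_0} > 0$, which is compatible with the choices already made in Lemma \ref{lemaexistence1}), every term tends to $0$, so $\{z_m\}$ is a Cauchy sequence in the Banach space $C([0,t_1];\mathcal{Z}^{\alpha_0})$ and therefore converges to some $z \in C([0,t_1];\mathcal{Z}^{\alpha_0})$. Note that $z(0) = \lim_m z^{(0)}_m = z_0$, and that, since $z_m(t) \in \mathcal{F}$ for every $t$ and $\mathcal{F}$ is (the union of closed balls in) $\mathcal{Z}^{\alpha_0}$, one also gets $z(t) \in \overline{\mathcal{F}}$, so $F$ remains Lipschitz along the limit trajectory.

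Next I would pass to the limit in the integral identity
\begin{align*}
z_m(t) = e^{-t\mathcal{A}}z^{(0)}_m + \int^{t}_0 e^{-(t-\tau)\mathcal{A}}\mathcal{P}_m\left[F(z_m(\tau)) + S(\tau)\right]d\tau.
\end{align*}
The linear term converges because $e^{-t\mathcal{A}}$ is bounded on $\mathcal{Z}^{\alpha_0}$ and $z^{(0)}_m \to z_0$. For the integral term I would split
\begin{align*}
\mathcal{P}_m\left[F(z_m(\tau)) + S(\tau)\right] - \left[F(z(\tau)) + S(\tau)\right] = \mathcal{P}_m\left(F(z_m(\tau)) - F(z(\tau))\right) + (\mathcal{P}_m - I_d)\left[F(z(\tau)) + S(\tau)\right],
\end{align*}
bound the first piece using the Lipschitz continuity of $F$ on $\mathcal{F}$ together with the uniform (in $\tau$) convergence $\|z_m - z\|_{\alpha_0} \to 0$, and the second using $\|\mathcal{P}_m - I_d\|_{\mathcal{L}(\mathcal{Z})} \to 0$ and the fact that $F(z(\cdot)) + S(\cdot) \in L^1(0,t_1;\mathcal{Z})$; the singular kernel $(t-\tau)^{-\alpha_0}$ is integrable since $\alpha_0 < 1$, exactly as in the estimates already carried out. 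Hence $z$ satisfies
\begin{align*}
z(t) = e^{-t\mathcal{A}}z_0 + \int^{t}_0 e^{-(t-\tau)\mathcal{A}}\left[F(z(\tau)) + S(\tau)\right]d\tau,\;\;\; t \in (0,t_1),
\end{align*}
and condition \eqref{II2} holds because $F(z(\cdot)) + S(\cdot)$ is bounded on $[0,t_1]$ (its values lie in $F(\mathcal{F})$, a bounded set, plus the bounded function $S$). By the converse direction of the equivalence quoted after \eqref{integraleq1}, $z$ is a strong solution on $(0,t_1)$ in the sense of Definition \ref{def1}, with $u(t) = (\tilde{A}_e + \tilde{A}_i)^{-1}R^{*}_H(s - A_i J u_m(t))$. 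Finally, by Theorem 3.2.2 of \cite{Henry} the strong solution with initial datum $z_0$ is unique, so $z$ is \emph{the} strong solution and the full sequence $\{z_m\}$ — not merely a subsequence — converges to it.

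The main obstacle I anticipate is the careful handling of the convergence of the nonlinear integral term uniformly in $t \in [0,t_1]$: one must make sure the Lipschitz constant $L$ and the open set $\mathcal{F}$ from condition \textbf{CB} genuinely control $F$ along \emph{all} the approximate trajectories $z_m$ simultaneously (this is where condition \textbf{CB} and the invariance $\mathcal{K}_m(V_\delta) \subseteq V_\delta$ from Lemma \ref{lemaexistence1} are essential), and that the limit trajectory $z$ stays inside $\overline{\mathcal{F}}$ so that $F(z(\tau))$ is well-defined and $F$ is still Lipschitz there. A secondary technical point is to confirm that the constants $t_1, \delta$ can be chosen once and for all, independently of $m$ — but this has already been arranged in Lemma \ref{lemaexistence1}, since the bounds there depend only on $B$, $L$, $\delta$ and the fixed family $\{z_0,\dots,z_{m_0}\}$, not on $m$. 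Everything else is a routine application of the bounds $\|e^{-t\mathcal{A}}\|_{\mathcal{L}(\mathcal{Z})} \le 1$, $\|(I_d + \mathcal{A}^{\alpha_0})e^{-t\mathcal{A}}\|_{\mathcal{L}(\mathcal{Z})} \le M t^{-\alpha_0}e^{a_1 t}$, and the convergence $\|\mathcal{P}_m - I_d\| \to 0$ in both $\mathcal{L}(\mathcal{Z})$ and $\mathcal{L}(\mathcal{Z}^{\alpha_0})$.
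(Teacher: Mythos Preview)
Your proposal is correct and follows essentially the same route as the paper: invoke Proposition~\ref{PropCauchy} to get a Cauchy sequence in $C([0,t_1];\mathcal{Z}^{\alpha_0})$, pass to the limit in the integral equation \eqref{eqintegral1}, and then verify \eqref{II2} so that the converse direction after \eqref{integraleq1} applies. Your write-up is in fact more careful than the paper's on two points --- you give the explicit splitting $\mathcal{P}_m[F(z_m)+S] - [F(z)+S] = \mathcal{P}_m(F(z_m)-F(z)) + (\mathcal{P}_m-I_d)[F(z)+S]$ for the integral term (the paper only says ``it is very easy''), and you correctly note that the limit trajectory lies in $\overline{\mathcal{F}}$ rather than $\mathcal{F}$, whereas the paper asserts $V_\delta$ is closed even though $\mathcal{F}$ is a union of open balls.
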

\begin{proof}
By Proposition \ref{PropCauchy}, we can affirm that sequence of strong Faedo-Galerkin approximations is a sequence of Cauchy in $C(0,t_1,\mathcal{Z}^{\alpha_0})$. Furthermore, we can choose $t_1$ and $\delta > 0$, such that $\left\{z_m\right\}^{\infty}_{m = 0} \subset V_{\delta}$, which is a closed subspace of $C(0,t_1,\mathcal{Z}^{\alpha_0})$, thus there is $z \in V_{\delta}$, such that
\begin{align*}
z_m \rightarrow z,\;\;\; \hbox{strongly in $C(0,t_1,\mathcal{Z}^{\alpha_0})$.}
\end{align*}
To demonstrate that $z$ is a strong solution, we must only prove that it satisfies \eqref{integraleq1} and \eqref{II2}. In first place, we have
\begin{align*}
z_m(t) = e^{-t\mathcal{A}}z^{(0)}_m + \int^{t}_0e^{-(t-\tau)\mathcal{A}}\mathcal{P}_m\left[F(z_m(\tau)) + S(\tau)\right]d\tau,\;\;\; \hbox{for each $m = 0,1,\cdots,$ and $t \in [0,t_1].$}
\end{align*}
It is very easy to prove that by making the limit on each sum of the equality it is obtained 
\begin{align*}
z(t) = e^{-t\mathcal{A}}z_0 + \int^{t}_0e^{-(t-\tau)\mathcal{A}}\mathcal{P}_m\left[F(z(\tau)) + S(\tau)\right]d\tau,\;\;\; t \in [0,t_1].
\end{align*}
On the other hand, since $z \in V_{\delta}$, it is possible to prove that 
\begin{align*}
\int^{t_1}_0\left\|F(z(\tau)) + S(\tau)\right\|_{\mathcal{Z}}d\tau < \infty.
\end{align*}
\end{proof}

Theorem \ref{Theo3} tells us that the only strong solution of a Cauchy problem associated to the torso-coupled bidomain model with initial condition $z_0 \in \mathcal{Z}^{\alpha_0}$, it is obtained as the limit of a sequence of strong Faedo-Galerkin approximations; which are obtained as solutions of the Cauchy problem \eqref{eqforz_m}, where the initial condition assumed is $z^{(0)}_m = \mathcal{P}_mz_0$. Note that \eqref{eqforz_m} is essentially a system of ordinary differential equations, which has important implications from both a theoretical and computational point of view. In the first place, if each $z_m$ is defined for all $t \geq 0$ so is the strong solution and if each $z_m$ is a $T$-periodic function, then its limit also is $T$-periodic. Computationally, this result provides a method of numerical resolution of the torso-coupled bidomain model which is theoretically supporting. Upon achieving convergence, we can take $z_m$ as an approximation of the solution sought by making $m$ large enough. The inequality \eqref{convelo1} provides an estimate of the velocity of convergence of the sequence of strong Faedo-Galerkin approximations.
\section{Existence of strong periodic solution}
In this section, we demonstrate the existence a strong $T$-periodic solution, in the sense of Definition \ref{def1}, assuming that $S$ is $T$-periodic, which is continuous from $[0,T]$ in $\mathcal{Z}$. 

From now on, we will use the following notation
\begin{align*}
&C_T = \left\{z \in C([0,T];\mathcal{Z}): z(0) = z(T)\right\},\\
&C^{\alpha_0}_T = \left\{z \in C([0,T];\mathcal{Z}^{\alpha_0}): z(0) = z(T)\right\},
\end{align*}
where, in each case, we consider the following norms
\begin{align*}
&\left\|f\right\|_{C_T} = \sup_{t \in [0,T]}\left\|f(t)\right\|_{\mathcal{Z}},\\
&\left\|f\right\|_{C^{\alpha_0}_T} = \sup_{t \in [0,T]}\left\|f(t)\right\|_{\mathcal{Z}^{\alpha_0}},
\end{align*}
for $f \in C_T$ or $f \in C^{\alpha_0}_T$, respectively.

As the previous section, this $T$-periodic solution is obtained as a limit of a sequence of solutions of system \eqref{eqforz_m} which are $T$-periodic.

Let 
\begin{align}\label{lineareq1}
\frac{dz_m}{dt} = -\mathcal{A}z_m + \mathcal{P}_mS
\end{align}
be the linear equation obtained from \eqref{eqforz_m}, and let $z_m \in C([0,\infty);\mathcal{Z}^{\alpha_0}_m)$ be a solution \eqref{lineareq1}. Then,
\begin{align*}
z_m(t) = e^{-t\mathcal{A}}z^{(0)}_m + \int^{t}_0e^{-(t - \tau)\mathcal{A}}\mathcal{P}_mS(\tau)d\tau,
\end{align*}
where $z_m(0) = z^{(0)}_m$. Due to the uniqueness of the solution of the Cauchy problem associated with \eqref{lineareq1} and the periodicity of the function $S$, it is possible to prove that $z_m$ is $T$-periodic if and only if $z_m(0) = z_m(T)$. Thus
\begin{align*}
z_m(T) = e^{-T\mathcal{A}}z^{(0)}_m + \int^{T}_0e^{-(T - \tau)\mathcal{A}}\mathcal{P}_mS(\tau)d\tau = z^{(0)}_m,
\end{align*}
that is,
\begin{align*}
\left(I_d - e^{-T\mathcal{A}}\right)z_m^{(0)} = \int^{T}_0e^{-(T - \tau)\mathcal{A}}\mathcal{P}_mS(\tau)d\tau.
\end{align*}
Let us remember that $0 \notin \sigma(\mathcal{A}) \Rightarrow 1 \notin \sigma\left(e^{-T\mathcal{A}}\right)$, thus the operator $I_d - e^{-T\mathcal{A}}$ is invertible and 
\begin{align*}
\left\|\left(I_d - e^{-T\mathcal{A}}\right)^{-1}\right\|_{\mathcal{L}(\mathcal{Z})} \leq \frac{1}{1 - e^{-Ta_1}}.
\end{align*}
Thus, we obtain
\begin{align*}
z_m^{(0)} = \left(I_d - e^{-T\mathcal{A}}\right)^{-1}\int^{T}_0e^{-(T - \tau)\mathcal{A}}\mathcal{P}_mS(\tau)d\tau.
\end{align*}
Finally, we can affirm that if $z_m$ is $T$-periodic, then
\begin{align*}
z_m(t) = \int^{T}_0\mathcal{R}(t,\tau)\mathcal{P}_mS(\tau)d\tau,
\end{align*}
where
\begin{align*}
\mathcal{R}(t,\tau) = \left\{
\begin{array}{cc}
\left(I_d - e^{-T\mathcal{A}}\right)^{-1}e^{-(t-\tau)\mathcal{A}}, & \hbox{if $0 \leq \tau \leq t \leq T,$}\\
\left(I_d - e^{-T\mathcal{A}}\right)^{-1}e^{-(t + T -\tau)\mathcal{A}}, & \hbox{if $0 \leq t \leq \tau \leq T.$}
\end{array}
\right.
\end{align*}
For convenience, for $f \in C_T$ we denote by
\begin{align*}
\mathcal{L}^{(p)}(f)(t) = \int^{T}_0\mathcal{R}(t,\tau)f(\tau)d\tau.
\end{align*}
Note that, in this case, $z_m$ is a $T$-periodic solution of \eqref{lineareq1} if and only if
\begin{align*}
z_m(t) = \mathcal{L}^{(p)}(\mathcal{P}_m S)(t),\;\;\; t \in [0,T].
\end{align*}
Before continuing with the results that interest us, let us give the following Lemma.
\begin{lemma}\label{Lemmaimpor2}
Let $f$ be a function in $C_T$, then $\mathcal{L}^{(p)}_m(f) \in C^{\alpha_0}_T$, and
\begin{align*}
\left\|\mathcal{L}^{(p)}(f)\right\|_{C^{\alpha_0}_T} \leq 2\left(\frac{1}{a_1} + \frac{T^{1-\alpha_0}}{(1-\alpha_0)\left(1 - e^{-a_1T}\right)}\right)\left\|f\right\|_{C_T}.
\end{align*} 
\end{lemma}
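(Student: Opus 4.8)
The plan is to start from the closed form already derived above, $\mathcal{L}^{(p)}(f)(t)=\int_0^T\mathcal{R}(t,\tau)f(\tau)\,d\tau$, and to rewrite it, using the explicit kernel, as
\[
\mathcal{L}^{(p)}(f)(t)=\left(I_d-e^{-T\mathcal{A}}\right)^{-1}\left(\int_0^t e^{-(t-\tau)\mathcal{A}}f(\tau)\,d\tau+\int_t^T e^{-(t+T-\tau)\mathcal{A}}f(\tau)\,d\tau\right),\qquad t\in[0,T].
\]
Here the bounded operator $\left(I_d-e^{-T\mathcal{A}}\right)^{-1}$, whose $\mathcal{L}(\mathcal{Z})$-norm is at most $(1-e^{-a_1T})^{-1}$ as recalled above, is factored out and commutes with $\mathcal{A}^{\alpha_0}$ and with the semigroup. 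To bound the $\mathcal{Z}^{\alpha_0}$-norm I would use $\|z\|_{\mathcal{Z}^{\alpha_0}}\le\|z\|_{\mathcal{Z}}+\|\mathcal{A}^{\alpha_0}z\|_{\mathcal{Z}}$ and estimate the two pieces separately, exploiting $\|e^{-s\mathcal{A}}\|_{\mathcal{L}(\mathcal{Z})}\le e^{-a_1 s}$ and the analytic smoothing bound $\|\mathcal{A}^{\alpha_0}e^{-s\mathcal{A}}\|_{\mathcal{L}(\mathcal{Z})}\le s^{-\alpha_0}e^{-a_1 s}\le s^{-\alpha_0}$ (a standard consequence of the spectral representation of $\mathcal{A}$, with constant $1$ since $\alpha_0<1$).

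For the $\mathcal{Z}$-piece, $\|f(\tau)\|_{\mathcal{Z}}\le\|f\|_{C_T}$ together with the two elementary integrals $\int_0^t e^{-a_1(t-\tau)}d\tau$ and $\int_t^T e^{-a_1(t+T-\tau)}d\tau$ (which add up to $a_1^{-1}(1-e^{-a_1T})$) gives, after the factor $(1-e^{-a_1T})^{-1}$, the contribution $a_1^{-1}\|f\|_{C_T}$. For the $\mathcal{A}^{\alpha_0}$-piece, moving $\mathcal{A}^{\alpha_0}$ inside the integrals and using $\|\mathcal{A}^{\alpha_0}e^{-s\mathcal{A}}\|\le s^{-\alpha_0}$, the integrals $\int_0^t(t-\tau)^{-\alpha_0}d\tau=t^{1-\alpha_0}/(1-\alpha_0)$ and $\int_t^T(t+T-\tau)^{-\alpha_0}d\tau=(T^{1-\alpha_0}-t^{1-\alpha_0})/(1-\alpha_0)$ are each at most $T^{1-\alpha_0}/(1-\alpha_0)$, so this piece contributes at most $2T^{1-\alpha_0}/((1-\alpha_0)(1-e^{-a_1T}))\,\|f\|_{C_T}$. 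Taking the supremum over $t\in[0,T]$ and adding the two contributions gives the stated inequality, with room to spare in the constant. The version with $\mathcal{P}_m$ inserted is identical since $\|\mathcal{P}_m\|_{\mathcal{L}(\mathcal{Z})}\le1$.

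What remains is to show that $\mathcal{L}^{(p)}(f)$ genuinely belongs to $C^{\alpha_0}_T$, i.e. that $t\mapsto\mathcal{L}^{(p)}(f)(t)$ is continuous from $[0,T]$ into $\mathcal{Z}^{\alpha_0}$ and satisfies $\mathcal{L}^{(p)}(f)(0)=\mathcal{L}^{(p)}(f)(T)$. The periodicity is immediate from $\mathcal{R}(0,\tau)=\mathcal{R}(T,\tau)$ for all $\tau\in[0,T]$. For continuity I would pass to the equivalent periodized-convolution form $\mathcal{L}^{(p)}(f)(t)=\int_0^\infty e^{-r\mathcal{A}}\widetilde f(t-r)\,dr$, with $\widetilde f$ the $T$-periodic extension of $f$, and estimate $\mathcal{A}^{\alpha_0}\big(\mathcal{L}^{(p)}(f)(t')-\mathcal{L}^{(p)}(f)(t)\big)$ by splitting the $r$-integral into a short piece near $0$, where $\int_0^{\varepsilon}r^{-\alpha_0}dr$ is small uniformly in $t$, and the remainder, on which strong continuity of the analytic semigroup and uniform continuity of $\widetilde f$ apply. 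I expect this last point to be the main obstacle: the kernel carries the integrable-but-singular factor $(t-\tau)^{-\alpha_0}$ and a corner at $\tau=t$, so continuity in the $\mathcal{Z}^{\alpha_0}$-topology (rather than mere uniform boundedness, which the estimate above already yields) has to be argued with some care; once that is in hand, the norm bound itself is a routine computation.
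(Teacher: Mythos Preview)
Your proposal is correct and follows essentially the same route as the paper: split $\mathcal{L}^{(p)}(f)(t)$ according to the two branches of the kernel $\mathcal{R}(t,\tau)$, use $\|z\|_{\alpha_0}\le\|z\|_{\mathcal{Z}}+\|\mathcal{A}^{\alpha_0}z\|_{\mathcal{Z}}$, apply $\|e^{-s\mathcal{A}}\|\le e^{-a_1 s}$ to the first piece and the smoothing estimate $\|\mathcal{A}^{\alpha_0}e^{-s\mathcal{A}}\|\le s^{-\alpha_0}$ to the second, and integrate. The paper bounds each of the two $\tau$-integrals separately by $\frac{1}{a_1}+\frac{T^{1-\alpha_0}}{(1-\alpha_0)(1-e^{-a_1T})}$ and then doubles, whereas you combine the two exponential integrals exactly to recover $a_1^{-1}(1-e^{-a_1T})$ before dividing, which is marginally sharper but not a genuinely different idea. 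You are also more thorough than the paper in that you explicitly address the periodicity $\mathcal{L}^{(p)}(f)(0)=\mathcal{L}^{(p)}(f)(T)$ and the continuity of $t\mapsto\mathcal{L}^{(p)}(f)(t)$ in $\mathcal{Z}^{\alpha_0}$; the paper's proof only establishes the norm bound and leaves the membership in $C^{\alpha_0}_T$ implicit.
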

\begin{proof}
If $f \in C^{\alpha_0}_T$
\begin{align*}
\left\|\mathcal{L}^{(p)}(f)(t)\right\|_{\alpha_0} \leq \int^{t}_0\left\|\left(I_d - e^{-T\mathcal{A}}\right)^{-1}e^{-(t-\tau)\mathcal{A}}f(\tau)\right\|_{\alpha_0}d\tau + \int^{T}_t\left\|\left(I_d - e^{-T\mathcal{A}}\right)^{-1}e^{-(t + T -\tau)\mathcal{A}}f(\tau)\right\|_{\alpha_0}d\tau.
\end{align*}
Now, 
\begin{align*}
&\int^{t}_0\left\|\left(I_d - e^{-T\mathcal{A}}\right)^{-1}e^{-(t-\tau)\mathcal{A}}f(\tau)\right\|_{\alpha_0}d\tau = \int^{t}_0\left\|\left(I_d - e^{-T\mathcal{A}}\right)^{-1}\left(I_d + A^{\alpha_0}\right)e^{-(t-\tau)\mathcal{A}}f(\tau)\right\|_{\mathcal{Z}}d\tau \\
&\leq \int^{t}_0\left\|\left(I_d - e^{-T\mathcal{A}}\right)^{-1}e^{-(t-\tau)\mathcal{A}}f(\tau)\right\|_{\mathcal{Z}}d\tau + \int^{t}_0\left\|\left(I_d - e^{-T\mathcal{A}}\right)^{-1}A^{\alpha_0}e^{-(t-\tau)\mathcal{A}}f(\tau)\right\|_{\mathcal{Z}}d\tau\\
&\leq \frac{\left\|f\right\|_{C^{\alpha_0}_T}}{1 - e^{-a_1T}}\int^{t}_0e^{-a_1(t - \tau)}d\tau +  \frac{\left\|f\right\|_{C^{\alpha_0}_T}}{1 - e^{-a_1T}}\int^{t}_0(t - \tau)^{-\alpha_0}d\tau\\
&\leq \frac{\left\|f\right\|_{C^{\alpha_0}_T}}{1 - e^{-a_1T}}\int^{T}_0e^{-a_1(t - \tau)}d\tau +  \frac{\left\|f\right\|_{C^{\alpha_0}_T}}{1 - e^{-a_1T}}\int^{T}_0(t - \tau)^{-\alpha_0}d\tau \\
& \leq \left(\frac{1}{a_1} + \frac{T^{1-\alpha_0}}{(1-\alpha_0)\left(1 - e^{-a_1T}\right)}\right)\left\|f\right\|_{C^{\alpha_0}_T}.
\end{align*}
Similarly, it is possible to demonstrate
\begin{align*}
\int^{T}_t\left\|\left(I_d - e^{-T\mathcal{A}}\right)^{-1}e^{-(t + T - \tau)\mathcal{A}}f(\tau)\right\|_{\alpha_0}d\tau  \leq \left(\frac{1}{a_1} + \frac{T^{1-\alpha_0}}{(1-\alpha_0)\left(1 - e^{-a_1T}\right)}\right)\left\|f\right\|_{C^{\alpha_0}_T}.
\end{align*}
\end{proof}

Now, consider the equation in \eqref{eqforz_m}
\begin{align}\label{impeq3}
\frac{dz_m}{dt} = -\mathcal{A}z_m + \mathcal{P}_mF(z_m) + \mathcal{P}_mS.
\end{align}
Following the same line of thought, it shows that $z_m$ is a $T$-periodic solution of \eqref{impeq3} if it satisfies the integral equation 
\begin{align*}
z_m(t) = \mathcal{L}^{(p)}\mathcal{P}_m\left(F(z_m) + S\right)(t),\;\;\; t \in [0,T].
\end{align*}
This means that we will search these periodic solutions as the fixed points of the operator 
\begin{align*}
\mathcal{K}^{(p)}_m(z_m)(t) := \mathcal{L}^{(p)}\mathcal{P}_m\left(F(z_m) + S\right)(t),
\end{align*}
defined from $C^{\alpha_0}_T$ in itself.

We consider a ball $\mathcal{B}_{\alpha_0}(r_0) \subset \mathcal{Z}^{\alpha_0}$, such that $F: \mathcal{B}_{\alpha_0}(r_0) \rightarrow \mathcal{Z}$ be Lipschitz continuous with Lipschitz constant $L$. Next, we shall prove it is possible suitably to choose $T, r_0$, such that the operator $\mathcal{K}^{(p)}_m$ has an only fixed point in 
\begin{align*}
C^{(\alpha_0,r_0)}_T = \left\{z_m \in C([0,T];\mathcal{B}_{\alpha_0}(r_0)): z_m(0) = z_m(T)\right\},
\end{align*}
which is endowed with the norm in $C^{\alpha_0}_T$.

We suppose that $z_m \in C^{(\alpha_0,r_0)}_T$, taking into account Lemma \ref{Lemmaimpor2} we have
\begin{align*}
\left\|\mathcal{K}^{(p)}_m(z_m)\right\|_{C^{\alpha_0}_T} \leq  2\left(\frac{1}{a_1} + \frac{T^{1-\alpha_0}}{(1-\alpha_0)\left(1 - e^{-a_1T}\right)}\right)\left\|F(z_m) + S\right\|_{C_T}.
\end{align*}
On the other hand, we have
\begin{align*}
\left\|F(z_m(t)) + S(t)\right\|_{\mathcal{Z}} \leq \left\|F(z_m(t)) - F(0)\right\|_{\mathcal{Z}} + \left\|F(0)\right\|_{\mathcal{Z}} + \widehat{S} \leq L\left\|z_m(t)\right\|_{\alpha_0} + \left\|F(0)\right\|_{\mathcal{Z}} + \widehat{S},
\end{align*}
where $\widehat{S} = \|S\|_{C_T}$, from which it is deduced that 
\begin{align*}
\left\|F(z_m) + S\right\|_{C_T} \leq Lr_0 + \left\|F(0)\right\|_{\mathcal{Z}} + \widehat{S}
\end{align*}
From the above, we obtain
\begin{align}\label{desigualdad1}
\left\|\mathcal{K}^{(p)}_m(z_m)\right\|_{C^{\alpha_0}_T} \leq 2\left(Lr_0 + \|F(0)\|_{\mathcal{Z}} + \widehat{S}\right)\left(\frac{1}{a_1} + \frac{T^{1-\alpha_0}}{(1 - \alpha_0)(1 - e^{-a_1T})}\right).
\end{align}
From a similar analysis, we have
\begin{align}\label{desigualdad2}
\left\|\mathcal{K}^{(p)}_m\left(z^{(1)}_m\right) - \mathcal{K}^{(p)}_m\left(z^{(2)}_m\right)\right\|_{C^{\alpha_0}_T} \leq 2\left(\frac{1}{a_1} + \frac{T^{1-\alpha_0}}{(1 - \alpha_0)\left(1 - e^{-a_1T}\right)}\right)Lr_0\left\|z^{(1)}_m - z^{(2)}_m\right\|_{C^{\alpha_0}_T}.
\end{align}
By the inequalities \eqref{desigualdad1} and \eqref{desigualdad2}, we can affirm that $\mathcal{K}^{(p)}_m(C^{(\alpha_0,r_0)}_T) \subseteq C^{(\alpha_0,r_0)}_T$ if 
\begin{align}\label{desigualdad3}
\left(Lr_0 + \|F(0)\|_{\mathcal{Z}} + \widehat{S}\right)\left(\frac{1}{a_1} + \frac{T^{1-\alpha_0}}{(1 - \alpha_0)(1 - e^{-a_1T})}\right) \leq \frac{r_0}{2},
\end{align}
and the map $\mathcal{K}^{(p)}_m$ is a contraction when
\begin{align}\label{desigualdad4}
\left(\frac{1}{a_1} + \frac{T^{1-\alpha_0}}{(1 - \alpha_0)\left(1 - e^{-a_1T}\right)}\right)Lr_0 < \frac{1}{2}.
\end{align}
\begin{theorem}
If conditions \eqref{desigualdad3}, \eqref{desigualdad4} are assumed, and besides
\begin{align*}
\frac{1}{a_1} + \frac{T^{1-\alpha_0}}{(1-\alpha_0)\left(1-e^{-a_1T}\right)} < \frac{1}{2},
\end{align*}
then, there is a sequence of strong Faedo-Galerkin approximations $\left\{z^{(p)}_m\right\}^{\infty}_{m = 0} \subset C^{(\alpha_0,r_0)}_T$, whose elements are $T$-periodic functions. Furthermore, the sequence converge to a $T$-periodic function $z^{(p)} \in C^{(\alpha_0,r_0)}_T$, which is a strong solution of the coupled-torso bidomian model in the sense of Definition \ref{def1}.
\end{theorem}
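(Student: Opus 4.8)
The plan is to run, in the $T$-periodic setting, the same Banach fixed point / Cauchy-sequence scheme used in Section 4, with the semigroup Duhamel formula replaced by the periodic Green operator $\mathcal{L}^{(p)}$. First I would fix $r_0$ (hence a ball $\mathcal{B}_{\alpha_0}(r_0)$ on which $F$ is Lipschitz with constant $L$) and the period $T$ so that \eqref{desigualdad3} and \eqref{desigualdad4} hold. By \eqref{desigualdad3} the operator $\mathcal{K}^{(p)}_m$ maps $C^{(\alpha_0,r_0)}_T$ into itself, and by \eqref{desigualdad4} it is a contraction there; since $C^{(\alpha_0,r_0)}_T$ is closed in $C^{\alpha_0}_T$, the contraction mapping theorem yields a unique $z^{(p)}_m \in C^{(\alpha_0,r_0)}_T$ with
\begin{align*}
z^{(p)}_m(t) = \mathcal{K}^{(p)}_m(z^{(p)}_m)(t) = \mathcal{L}^{(p)}\mathcal{P}_m\big(F(z^{(p)}_m) + S\big)(t), \qquad t \in [0,T].
\end{align*}
Retracing the computation that precedes the theorem, this identity forces $z^{(p)}_m$ to be a $T$-periodic solution of \eqref{impeq3}, hence of the Faedo--Galerkin system \eqref{FGsystem}; in particular $z^{(p)}_m \in C^1([0,T];\mathcal{Z}^{\alpha_0}_m)$, so it qualifies as a strong Faedo--Galerkin approximation.

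Next I would show $\{z^{(p)}_m\}$ is Cauchy in $C^{\alpha_0}_T$. Subtracting the integral identities for $m$ and $n$, writing $\mathcal{P}_m F(z^{(p)}_m) - \mathcal{P}_n F(z^{(p)}_n) = \mathcal{P}_m\big(F(z^{(p)}_m) - F(z^{(p)}_n)\big) + (\mathcal{P}_m - \mathcal{P}_n)F(z^{(p)}_n)$, and applying Lemma~\ref{Lemmaimpor2} together with $\|\mathcal{P}_m\|_{\mathcal{L}(\mathcal{Z})}\le 1$, the Lipschitz bound on $F$, and $\|z^{(p)}_n(\tau)\|_{\alpha_0}\le r_0$, one obtains
\begin{align*}
\|z^{(p)}_m - z^{(p)}_n\|_{C^{\alpha_0}_T} \le 2CL\,\|z^{(p)}_m - z^{(p)}_n\|_{C^{\alpha_0}_T} + 2C\big(Lr_0 + \|F(0)\|_{\mathcal{Z}} + \widehat{S}\big)\|\mathcal{P}_m - \mathcal{P}_n\|_{\mathcal{L}(\mathcal{Z})},
\end{align*}
with $C = \frac{1}{a_1} + \frac{T^{1-\alpha_0}}{(1-\alpha_0)(1-e^{-a_1T})}$. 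The assumed inequalities keep the coefficient $2CL$ strictly below $1$, so the first term is absorbed on the left and, since $\|\mathcal{P}_m - \mathcal{P}_n\|_{\mathcal{L}(\mathcal{Z})}\to 0$, the right-hand side tends to $0$ as $m,n\to\infty$. As $C^{(\alpha_0,r_0)}_T$ is closed in $C^{\alpha_0}_T$, the limit $z^{(p)}$ lies in $C^{(\alpha_0,r_0)}_T$, and $z^{(p)}(0)=z^{(p)}(T)$ because each $z^{(p)}_m$ is $T$-periodic. Passing to the limit in the integral identity — using continuity of $F$ on $\mathcal{B}_{\alpha_0}(r_0)$, the boundedness of $\mathcal{L}^{(p)}:C_T\to C^{\alpha_0}_T$ from Lemma~\ref{Lemmaimpor2}, and $\mathcal{P}_m\to I_d$ — gives $z^{(p)} = \mathcal{L}^{(p)}\big(F(z^{(p)})+S\big)$.

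It then remains to identify this periodic mild solution with a strong solution in the sense of Definition~\ref{def1}. Setting $g = F(z^{(p)})+S$ and using $e^{-(t+T-\tau)\mathcal{A}} = e^{-T\mathcal{A}}e^{-(t-\tau)\mathcal{A}}$ together with $(I_d - e^{-T\mathcal{A}})^{-1}e^{-T\mathcal{A}} + I_d = (I_d - e^{-T\mathcal{A}})^{-1}$, a direct manipulation of the two branches defining $\mathcal{R}(t,\tau)$ yields, for $t\in[0,T]$,
\begin{align*}
\mathcal{L}^{(p)}(g)(t) = e^{-t\mathcal{A}}z^{(p)}(0) + \int_0^t e^{-(t-\tau)\mathcal{A}}g(\tau)\,d\tau,
\end{align*}
so $z^{(p)}$ satisfies the Cauchy integral equation \eqref{integraleq1} with $z_0 := z^{(p)}(0)\in\mathcal{Z}^{\alpha_0}$ (extend $T$-periodically to $[0,\infty)$). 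Moreover $z^{(p)}(\tau)\in\mathcal{B}_{\alpha_0}(r_0)$ for all $\tau$, where $F$ is Lipschitz and hence bounded, and $S$ is continuous, so \eqref{II2} holds; by the converse recalled after \eqref{II2} (cf.\ \cite{Henry}) $z^{(p)}$ is a strong solution, and by construction it is $T$-periodic, with companion potential $u^{(p)}(t) = (\tilde A_e + \tilde A_i)^{-1}R^{*}_H\big(s - A_i J u^{(p)}_m(t)\big)$.

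The routine part is the first two steps, which merely repeat Lemma~\ref{lemaexistence1}, Proposition~\ref{PropCauchy} and Theorem~\ref{Theo3} with $\mathcal{K}_m$ replaced by $\mathcal{K}^{(p)}_m$. The main obstacle is the last step: one must verify that a fixed point of the \emph{periodic} integral operator actually solves the Cauchy integral equation \eqref{integraleq1}, which rests on the algebraic identity above linking $\mathcal{L}^{(p)}$ to the variation-of-constants formula, and then check \eqref{II2} so that the converse of \cite{Henry} upgrades the mild solution to a strong one. Some care is also needed to confirm that the three smallness hypotheses — in particular $C<\tfrac12$ — are exactly what is required to make $\mathcal{K}^{(p)}_m$ a self-map and contraction on $C^{(\alpha_0,r_0)}_T$ for every $m$ simultaneously, and to keep $2CL<1$ so that the Cauchy estimate closes uniformly in $m$.
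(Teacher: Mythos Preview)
Your proposal is correct and follows essentially the same route as the paper: Banach fixed point for each $\mathcal{K}^{(p)}_m$ on $C^{(\alpha_0,r_0)}_T$, then the same splitting $\mathcal{P}_mF(z_m)-\mathcal{P}_nF(z_n)=\mathcal{P}_m(F(z_m)-F(z_n))+(\mathcal{P}_m-\mathcal{P}_n)F(z_n)$ together with Lemma~\ref{Lemmaimpor2} to get a Cauchy estimate that closes because $2CL<1$ (which indeed follows from \eqref{desigualdad3}), and finally passage to the limit in the closed set $C^{(\alpha_0,r_0)}_T$. Your last step, rewriting $\mathcal{L}^{(p)}(g)(t)=e^{-t\mathcal{A}}z^{(p)}(0)+\int_0^t e^{-(t-\tau)\mathcal{A}}g(\tau)\,d\tau$ and invoking \eqref{II2}, makes explicit what the paper disposes of with ``it is easy to see that $z^{(p)}$ is a $T$-periodic strong solution,'' so you have supplied more detail than the original while using the identical strategy.
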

\begin{proof}
The inequalities \eqref{desigualdad3} and \eqref{desigualdad4}, imply the existence of only fixed point of $\mathcal{K}^{(p)}_m$ in $C^{(\alpha_0,r_0)}_T$, denoted by $z^{(p)}_m$, which is a $T$-periodic solution of \eqref{impeq3}. 

Now, we prove that $\left\{z^{(p)}_m\right\}^{\infty}_{m = 0}$ is a Cauchy sequence in $C^{\alpha_0}_T$. In fact, we have
\begin{align*}
&\left\|z_m - z_n\right\|_{C^{\alpha_0}_T} = \left\|\mathcal{K}^{(p)}_m(z_m) - \mathcal{K}^{(p)}_n(z_n)\right\|_{C^{\alpha_0}_T} = \left\|\mathcal{L}^{(p)}\left(\mathcal{P}_mF(z_m) - \mathcal{P}_nF(z_n)\right)\right\|_{C^{\alpha_0}_T} \\
&\leq \left\|\mathcal{L}^{(p)}\mathcal{P}_m\left(F(z_m) - F(z_n)\right)\right\|_{C^{\alpha_0}_T} + \left\|\mathcal{L}^{(p)}\left(\mathcal{P}_m - \mathcal{P}_n\right)(F(z_n) - F(0))\right\|_{C^{\alpha_0}_T} + \left\|\mathcal{L}^{(p)}\left(\mathcal{P}_m - \mathcal{P}_n\right)F(0)\right\|_{C^{\alpha_0}_T}\\
&\leq 2\left(\frac{1}{a_1} + \frac{T^{1-\alpha_0}}{(1-\alpha_0)\left(1 - e^{-a_1T}\right)}\right)\left(L\left\|z_m - z_n\right\|_{C^{\alpha_0}_T} + \left\|\mathcal{P}_m - \mathcal{P}_n\right\|_{\mathcal{L}(\mathcal{Z})}(Lr_0 + \|F(0)\|_{\mathcal{Z}})\right),
\end{align*}
thus
\begin{align*}
\left\|z_m - z_n\right\|_{C^{\alpha_0}_T} \leq \frac{Lr_0 + \|F(0)\|_{\mathcal{Z}}}{1 - 2\left(\frac{1}{a_1} + \frac{T^{1-\alpha_0}}{(1-\alpha_0)\left(1 - e^{-a_1T}\right)}\right)} \left\|\mathcal{P}_m - \mathcal{P}_n\right\|_{\mathcal{L}(\mathcal{Z})}.
\end{align*}
The right-side of the above inequality converges towards to zero, thus, it is demonstrated that the sequence $\left\{z^{(p)}_m\right\}^{\infty}_{m = 0}$ is of Cauchy in $C^{\alpha_0}_T$. Furthermore, since $C^{(\alpha_0,r_0)}_T$ is closed in $C^{\alpha_0}_T$, being this last a Banach space, we obtain that 
\begin{align*}
z^{(p)}_m \rightarrow z^{(p)} \in C^{(\alpha_0,r_0)}_T.
\end{align*}
It is easy to see that $z^{(p)}$ is a $T$-periodic strong solution of the torso-coupled bidomain model. 
\end{proof}

\bibliography{mybibfile}

\end{document}